\documentclass{amsart}
\usepackage{color}

\date{ \today}

\title[Decompositions of the automorphism groups of edge-colored graphs]{Decompositions of the automorphism groups of edge-colored graphs into the direct product of permutation groups}
\author{Mariusz Grech}

\keywords{Colored graph, automorphism group, permutation group, direct product.}

\address{Institute of Mathematics, University of Wroc{\l}aw \\
pl.Grunwaldzki 2, 50-384 Wroc{\l}aw, Poland}
\email{Mariusz.Grech@math.uni.wroc.pl}

\begin{document}

\newtheorem{Theorem}{Theorem}[section]
\newtheorem{Observation}[Theorem]{Observation}
\newtheorem{Lemma}[Theorem]{Lemma}
\newtheorem{Fact}[Theorem]{Fact}
\newtheorem{Proposition}[Theorem]{Proposition}
\newtheorem{Corollary}[Theorem]{Corollary}
\newtheorem{Problem}[Theorem]{Problem}
\newtheorem{Example}[Theorem]{Example}

\newcommand{\cnd}  {$\Box$\bigskip}
\newcommand{\V}     {\mbox{$\cal V$}}
\newcommand{\AUT}     {\mbox{$\it AUT$}}
\newcommand{\B}     {\mbox{$ {\cal A}_2 $}}
\newcommand{\A}     {\mbox{${\cal A}_1$}}
\newcommand{\C}     {\mbox{$\cal A$}}
\newcommand{\G}     {\Gamma}
\newcommand{\<}     {\langle }
\renewcommand{\>}   {\rangle }
\newcommand{\s}     {\mbox{$\bf\sigma$}}
\newcommand{\AB}     {\mbox{$\times$}}
\newcommand{\AC}     {\mbox{$\otimes$}}
\newcommand{\rownolegly}[2]{{#1}^{||#2}}
\newcommand{\Au}    {\rm Aut}

\begin{abstract}
In the paper \emph{Graphical complexity of products of permutation groups},
M. Grech, A. Je\.{z}, A. Kisielewicz have proved that the direct product of automorphism groups of edge-colored graphs is itself the automorphism groups of an edge-colored graph.
In this paper, we study the direct product of two permutation groups such that at least one of them fails to be the automorphism group of an edge-colored graph.
We find necessary and sufficient conditions for the direct product to be the automorphism group of an edge-colored graph. The same problem is solved for the edge-colored digraphs.
\end{abstract}

\maketitle

\bigskip

\section{Introduction}

For permutation groups $(A,V), (B,W)$,  the {\it direct product} of $A$ and
$B$ (with product action) is a permutation group $(A \times B, V \times W)$ with the action given by
$$(a,b)(x,y)= (a(x),b(y)).$$

The study of the direct product of automorphism groups of graphs was initiated by G. Sabidussi \cite{sa} in 1960.
The problem was taken up in 1971 by M. Watkins \cite{wa1}.
In 1972, L. Nowitz and M. Watkins \cite{nowa3}, and independently W. Imrich
\cite{imr0}, have described the conditions under which the direct product of \emph{regular} permutation groups that are automorphism groups of graphs is itself the automorphism group of a graph. 
This result was a contribution to the description of all \emph{regular} automorphism groups of graphs completed in 1978 by C. Godsil \cite{god} for graphs and 1980 by L. Babai \cite{ba1} for digraphs.

The results given in \cite{nowa3,imr0} are extended to arbitrary permutation groups in \cite{gre}, where the description of the conditions, under which the direct product of automorphism groups of graphs is itself an automorphism group of a graph, is given. 
In \cite{gre1}, the same is done for digraphs.
In \cite{je}, the direct product of automorphism groups of edge-colored graphs and edge-colored digraphs was studied.
It is shown that the direct product of automorphism groups of edge-colored graphs (digraphs) is, itself, an automorphism group of an edge-colored graph (digraph). 
This and other results in \cite{pei1} and \cite{grekis1} show that the whole problem is more natural for edge-colored graphs(digraphs) than for simple graphs and digraphs.
In \cite{gre5} it has been shown that direct product usually does not require more colors to be represented as the automorphism group of a colored graph than the components themselves.

Considerations on automorphism groups of edge-colored graphs and digraphs have been started by H. Wielandt in \cite{wie}, where permutation groups that are automorphism groups of edge-colored digraphs are called $2$-closed, and those that are automorphism groups of edge-colored graphs are referred to as $2^*$-closed.
In \cite{kis}, A. Kisielewicz has introduced the notion of graphical complexity of permutation groups and suggested the study of products of permutation groups in this context.
By $GR(k)$, we denote the class of automorphism groups of $k$-edge-colored graphs (those using at most $k$ colors), and by $GR$, the union of all the classes $GR(k)$ (that is, $2^*$-closed groups).
Similarly, $DGR(k)$ is the class of automorphism groups of $k$-edge-colored digraphs, and $DGR$ is the union of all the classes $DGR(k)$ (that is, $2$-closed groups). This is clear that $GR
\subseteq DGR$ and $GR(k) \subseteq DGR(k)$, for any $k$.

Now, the main problem is to determine which permutation groups are automorphism groups of edge-colored graphs.
Various aspects of this problem are investigated.
Except for the mentioned above (the list of publications in these direction is much larger), in \cite{grekis2,grekis3}, a description of the so-called totally symmetric graphs is given.
Recently, many other results concerning highly symmetric graphs have been obtained using different terminology of homogeneous factorization of graphs; see the bibliography in \cite{bon,GLPP,LLP,LP}.

A solution of the problem, when the direct product of permutation groups is an automorphism group of an edge-colored graph, may be considered as a contribution to the general problem. In \cite{je}, the following is proved.

\begin{Theorem}
If $A, B \in GR$, then $A \times B \in GR$.  Also, if $A, B \in DGR$, then $A \times B \in DGR$.
\end{Theorem}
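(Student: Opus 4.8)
\emph{Proof proposal.}
The plan is to realize $A\times B$ as the full automorphism group of an explicitly constructed edge-colored graph on the product vertex set. I would fix edge-colored graphs $\Gamma_A$ on $V$ and $\Gamma_B$ on $W$ with $A=\mathrm{Aut}(\Gamma_A)$ and $B=\mathrm{Aut}(\Gamma_B)$, writing $c_A,c_B$ for their colorings, and assume $|V|,|W|\ge 2$ (if one factor acts on a single point the product is isomorphic to the other factor, so nothing is to prove). On $V\times W$ I would color every pair using three pairwise disjoint palettes: a pair $\{(x,y_1),(x,y_2)\}$ inside one \emph{row} $\{x\}\times W$ gets the color $c_B(y_1,y_2)$ from a palette $P_B$; a pair $\{(x_1,y),(x_2,y)\}$ inside one \emph{column} $V\times\{y\}$ gets $c_A(x_1,x_2)$ from a disjoint palette $P_A$; and every remaining pair $\{(x_1,y_1),(x_2,y_2)\}$ with $x_1\ne x_2$, $y_1\ne y_2$ gets a single fixed color $g$ from a third palette. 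Call the result $\Gamma$.

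The forward inclusion $A\times B\subseteq\mathrm{Aut}(\Gamma)$ is immediate: an element acting as $(x,y)\mapsto(a(x),b(y))$ preserves rows, columns, and the generic class setwise, and it preserves row- and column-colors precisely because $a\in A$ and $b\in B$ preserve $c_A,c_B$.

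The substance is the reverse inclusion. Given $\phi\in\mathrm{Aut}(\Gamma)$, disjointness of the palettes forces $\phi$ to carry the set of $P_B$-colored pairs onto itself, so $\phi$ is an automorphism of the uncolored graph $H$ whose edges are exactly those pairs. Since $H$ is a disjoint union of $|V|$ cliques of equal size $|W|\ge 2$, one per row, its automorphisms permute its connected components, hence $\phi$ preserves the row partition; the same argument with $P_A$ shows $\phi$ preserves the column partition. As each point is the unique intersection of its row and its column, $\phi$ must act as $(x,y)\mapsto(\sigma(x),\tau(y))$ for some $\sigma\in\mathrm{Sym}(V)$, $\tau\in\mathrm{Sym}(W)$, i.e.\ already in product-action form. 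Preservation of row-colors then gives $c_B(\tau(y_1),\tau(y_2))=c_B(y_1,y_2)$ for all $y_1,y_2$, so $\tau\in B$, and symmetrically $\sigma\in A$; thus $\phi\in A\times B$ and $\mathrm{Aut}(\Gamma)=A\times B$.

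For digraphs the construction would be verbatim the same with ordered pairs replacing unordered ones, and the identical component argument applies to the underlying graph of the row-arcs. I expect the one genuinely delicate point to be ruling out automorphisms that interchange the roles of rows and columns, which can occur when the two factors are isomorphic and equinumerous; this is exactly what the disjointness of $P_A$ and $P_B$ is designed to prevent, since a row-edge and a column-edge then never share a color. The rest is bookkeeping, together with the trivial degenerate cases $|V|=1$ or $|W|=1$.
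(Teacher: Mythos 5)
Your construction is correct and is essentially the paper's own argument: here Theorem 1.1 is obtained from Fact~\ref{f1} and Fact~\ref{f2}, whose proof rests on exactly your key observation that row-edges, column-edges and ``generic'' edges lie in distinct color classes, so every automorphism preserves the row and column partitions, hence has product form $(\sigma,\tau)$, and restricting to a single row/column forces $\sigma\in\bar A=A$ and $\tau\in\bar B=B$. The only cosmetic difference is that the paper colors edges by the NOr-orbitals (resp.\ Or-orbitals) of $A\times B$ instead of by an explicit three-palette product of two representing colorings; both yield the same conclusion.
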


The opposite is not generally true.
In this paper, we consider the case when at least one of the components does not belong to $GR$. The main results are (Theorem~\ref{GR}) in which conditions under which the direct product of two permutation groups belongs to $GR$ are given, and
Theorem~\ref{NGR} showing that in case of digraphs the condition that both the components are in DGR is also necessary.

\section{Preliminaries}

We assume that the reader has basic knowledge in the areas of graphs and permutation groups, so we omit an introduction to standard terminology. If necessary, additional details can be found in \cite{ba,yap}.

By a {\it $k$-edge-colored graph} $G$, we mean a pair $G = (V,E)$, where
$V$ is the set of vertices of $G$, and $E$ the {\it edge-color function} from the set $P_2(V)$ of unordered pairs of vertices into the set of colors
$\{ 0, \ldots, k-1\}$ ($E : P_2(V) \rightarrow \{ 0, \ldots, k-1\}$). Thus,
$G$ is a complete simple graph with colored edges. Similarly, by a {\it $k$-edge-colored digraph} $G$, we mean a pair $(V,E)$ where $E$ is a color function from the set of ordered pairs of different elements of
$V$ to the set of colors $\{ 0, \ldots, k~-~1\}$ ($E: ((V \times V) \setminus \{(v,v); v \in V\}) \rightarrow \{ 0, \ldots, k-1\}$).

An automorphism of an edge-colored graph $G$ is a permutation $\sigma$ of the set $V$ preserving the edge function: $E(\{ v, w\}) = E( \{ \sigma(v), \sigma(w) \})$, for all $v,w \in V$.
The group of automorphisms of $G$ will be denoted by $Aut(G)$, and considered as a permutation group $(Aut(G),V)$ acting on the set of the vertices $V$.
The similar definitions are for edge-colored digraphs.

All groups considered in this paper are groups of permutations.
They are considered up to permutation group isomorphism.
Generally, a permutation group $A$ acting on a set $V$ is denoted $(A,V)$ or just $A$, if the set $V$ is clear from the context or not important. By
$S_n$ we denote the symmetric group on $n$ elements, and by $I_n$, the one element group acting on $n$ elements (consisting of the identity only, denoted by $id$).
$E(\{v_i,v_{(i+1\; \mod n)}\})=1$ for all $i$, and $E(v_i,v_j)=0$, otherwise.

With a given group of permutations $(A,V)$ we associate two other permutation groups $A_1$ and $A_2$, abstractly  isomorphic with $A$. The group $A_1$ acts on the set $P_2(V)$ of the unordered pairs of $V$ in the following way.
If $a \in A$ and $\sigma: A \to A_1$ is an abstract automorphism, then
$\sigma(a)(\{v,w\})=\{a(v),a(w)\}$.
The group $A_2$ acts on the set  $(V \times V) \setminus \{(v,v); v \in V\}$ in the following way.
If $a \in A$ and $\tau: A \to A_2$ is an abstract automorphism, then
$\tau(a)((v,w))=(a(v),a(w))$.
Later on, we will identify $A$, $A_1$, and $A_2$, and we will be writing that $A$ acts on $P_2(V)$ or $A$ acts on $(V \times V) \setminus \{(v,v); v \in V\}$ in the meaning as above.
The orbits of $A$ in the action on $P_2(V)$ are known as orbitals of $A$.
However, to distinguish them from the other we will call them
\emph{NOr-orbitals} of $A$, and the orbits of $A$ in the action on $(V
\times V) \setminus \{(v,v); v \in V\}$, we will call \emph{Or-orbitals} of $A$.

For two Or-orbitals $O_1, O_2$ we say that $O_1$ is {\it paired}  with
$O_2$ if and only if $O_2 = \{(w,v):  (v,w) \in O_1\}$.
We call an Or-orbital $O$ {\it self-paired} if it is paired with itself.
Moreover, we say that a permutation $\s$ pairs $O_1$ and $O_2$ if $\s(O_1)
= O_2$ and $O_1$ is paired with $O_2\ne O_1$.

Since $A \times I_1 = I_1 \times A = A$, in this paper, we consider only the direct products $A \times B$ with both the permutation groups $A, B$ different from $I_1$.

Let $A$ be a permutation group. By $\bar{A}$, we denote the smallest permutation group (that acts on the same set) which contains $A$ and belongs to $GR$.
This is clear that the group $\bar{A}$ exists.
Moreover, we have $A \in GR$ if and only if $A= \bar{A}$.
Let $O_0, \ldots O_{k-1}$ be all the NOr-orbitals of $A$.
For a group $(A,V)$, we define an edge-colored graph $G(A)$ as follows.
$$G(A) = (V,E), \textrm{ where } E :P_2(V) \rightarrow \{ 0, \ldots k-1\}.$$
$$E(\{ v,w \})= i \textrm{\,\, if and only if the edge } \{ v,w\} \textrm{ belongs to the orbit } O_i.$$

\vspace{3mm}

\begin{Fact}\label{f1}
$Aut(G(A)) = \bar{A}$.
\end{Fact}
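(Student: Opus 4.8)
The plan is to establish the two inclusions $\bar{A} \subseteq Aut(G(A))$ and $Aut(G(A)) \subseteq \bar{A}$ separately. The first is essentially immediate from the definitions, while the second requires a short observation about how the colorings preserved by $A$ must behave.

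For the inclusion $\bar{A} \subseteq Aut(G(A))$, I would first note that $G(A)$ is by construction an edge-colored graph, so $Aut(G(A)) \in GR$. Next I would check that $A \subseteq Aut(G(A))$: for any $a \in A$ and any pair $\{v,w\}$, the image $\{a(v),a(w)\}$ lies in the same NOr-orbital as $\{v,w\}$, which is exactly the statement that $a$ permutes the $A$-orbits on $P_2(V)$ within themselves; hence $a$ preserves the coloring $E$ of $G(A)$. Since $\bar{A}$ is defined as the smallest group in $GR$ containing $A$, and $Aut(G(A))$ is one such group, minimality gives $\bar{A} \subseteq Aut(G(A))$.

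For the reverse inclusion, I would use that $\bar{A} \in GR$, so there exists an edge-colored graph $H = (V,E')$ with $Aut(H) = \bar{A}$. The key step is the observation that, since $A \subseteq \bar{A} = Aut(H)$, every element of $A$ preserves $E'$; consequently each color class of $E'$ is a union of NOr-orbitals of $A$, i.e. the coloring $E$ defining $G(A)$ refines $E'$. Now take any $\sigma \in Aut(G(A))$. Then $\sigma$ preserves $E$, that is, it maps each NOr-orbital onto itself; because every $E'$-color class is a union of NOr-orbitals, $\sigma$ also preserves $E'$, so $\sigma \in Aut(H) = \bar{A}$. This yields $Aut(G(A)) \subseteq \bar{A}$ and completes the argument.

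The proof is short, and the only point requiring care is the middle claim of the last paragraph — that a $GR$-representation of $\bar{A}$ must use colors saturated by the NOr-orbitals of $A$. I expect this to be the main (though mild) obstacle: it amounts to recognizing that $E$ is the finest $A$-invariant edge-coloring, so any other $A$-invariant coloring is a coarsening of $E$, and coarsening a coloring can only enlarge its automorphism group. Once this refinement relationship is isolated, both inclusions follow directly.
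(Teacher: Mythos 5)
Your proof is correct and follows essentially the same route as the paper: both directions rest on the observation that $E$ is constant exactly on the NOr-orbitals of $A$, so any $A$-invariant coloring is a coarsening of $E$ and its automorphism group must contain $Aut(G(A))$. The only cosmetic difference is that you apply this directly to a graph representing $\bar{A}$, whereas the paper runs the argument for an arbitrary $B \in GR$ containing $A$ and then invokes minimality.
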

\begin{proof}
Choose an arbitrary edge $e$ of $G(A)$ and $a \in A$.
There is $i$ such that $e \in O_i$.
Since $O_i$ is an orbital of $A$, we have $a(e) \in O_i$.
Hence, by definition of $G(A)$ the edges $e$ and $a(e)$ have the same color.
It follows that $Aut(G(A)) \supseteq A$.
Consequently, since $\bar{A}$ is the smallest group in $GR$ that contains
$A$, we have  $Aut(G(A)) \supseteq \bar{A}$.

For the opposite inclusion take an arbitrary $B \in GR$ such that $B \supseteq A$.
Then, $B = Aut(H)$ for some $H=(V,F)$.
Let $e_1, e_2$ be two edges that belong to the same orbital.
Since there is $a \in A$ that $a(e_1) = a(e_2)$ and $A \subseteq B$, we have $F(e_1) = F(e_2)$.
Let now $b \in Aut(G(A))$ and $e$ be an arbitrary edge of a graph $H$.
Then, $E(a(e))=E(e)$.
As we have just shown $F(a(e))=F(e)$.
Hence, $b \in B$.
Thus, $Aut(G(A)) \subseteq B$.
Consequently, $Aut(G(A)) \subseteq \bar{A}$.
Therefore, $Aut(G)=\bar{A}$
\end{proof}

\begin{Fact}\label{f2}
 $A \times B \subseteq Aut(G(A \times B)) \subseteq \bar{A} \times \bar{B}$,
\end{Fact}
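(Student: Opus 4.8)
The plan is to route both inclusions through Fact~\ref{f1}, which identifies $Aut(G(A \times B))$ with $\overline{A \times B}$, the smallest group in $GR$ (acting on the common set $V \times W$) that contains $A \times B$. Once this identification is in place, neither inclusion requires any combinatorics about orbitals: the left one is definitional, and the right one is a minimality argument.

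First I would dispose of the left inclusion. Applying Fact~\ref{f1} to the group $A \times B$ acting on $V \times W$ with the product action gives $Aut(G(A \times B)) = \overline{A \times B}$, and by the very definition of the closure operation $A \mapsto \bar{A}$ this group contains $A \times B$; hence $A \times B \subseteq Aut(G(A \times B))$. For the right inclusion the strategy is to exhibit $\bar{A} \times \bar{B}$ as a competitor in the family of $GR$-groups containing $A \times B$, and then invoke minimality. Two ingredients are needed. On one hand, $A \subseteq \bar{A}$ and $B \subseteq \bar{B}$ by the definition of the closure, so passing to the product action yields $A \times B \subseteq \bar{A} \times \bar{B}$. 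On the other hand, $\bar{A}, \bar{B} \in GR$, so the Theorem from \cite{je} (if $A, B \in GR$ then $A \times B \in GR$) applies and gives $\bar{A} \times \bar{B} \in GR$. Thus $\bar{A} \times \bar{B}$ is a group in $GR$ containing $A \times B$, and since $\overline{A \times B}$ is by Fact~\ref{f1} the smallest such group, we conclude $\overline{A \times B} \subseteq \bar{A} \times \bar{B}$. Combining this with the identification $Aut(G(A \times B)) = \overline{A \times B}$ completes the chain.

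I do not expect a genuine obstacle here: the argument is essentially bookkeeping that funnels everything through the minimality characterization of $\bar{A}$ supplied by Fact~\ref{f1}, together with the closure of $GR$ under direct products from the Theorem. The only point deserving a moment's care is the inclusion $A \times B \subseteq \bar{A} \times \bar{B}$: it must be read as an inclusion of permutation groups on the common set $V \times W$ under the product action, where it follows at once from $A \subseteq \bar{A}$ and $B \subseteq \bar{B}$.
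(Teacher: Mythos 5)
Your proof is correct, but for the second inclusion it takes a genuinely different route from the paper's. The paper argues directly and combinatorially: the edges lying in a row, i.e.\ of the form $\{(v_1,w),(v_2,w)\}$, and those lying in a column occupy NOr-orbitals of $A\times B$ distinct from those of all other edges, hence receive distinct colors in $G(A\times B)$; consequently every automorphism maps rows to rows and columns to columns and is therefore a product permutation $(a,b)$, and restricting to a single row, which is a recolored copy of $G(A)$, forces $a\in\bar{A}$ (symmetrically $b\in\bar{B}$). You instead exhibit $\bar{A}\times\bar{B}$ as a group in $GR$ containing $A\times B$ --- invoking the theorem quoted from \cite{je} to get $\bar{A}\times\bar{B}\in GR$ --- and conclude by the minimality of $\overline{A\times B}=Aut(G(A\times B))$ supplied by Fact~\ref{f1}. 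Both arguments are sound; the first inclusion is handled essentially identically in both. The trade-off is this: your argument is shorter and purely formal, but it imports the nontrivial external closure result from \cite{je}, whereas the paper's argument is self-contained and in fact re-proves that result as a byproduct (taking $A,B\in GR$ in Fact~\ref{f2} collapses the chain to $Aut(G(A\times B))=A\times B$). So your version makes Fact~\ref{f2} logically dependent on \cite{je}, which is legitimate here since that theorem is cited as established, but it loses the independence (and the explicit row/column structure of $Aut(G(A\times B))$) that the paper's proof provides and that the later lemmas implicitly reuse.
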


\begin{proof}
Exactly in the same way as in proof of the Fact~\ref{f1} above we have  $A \times B \subseteq Aut(G(A \times B))$.
We prove the second inclusion.

Observe that the edges of the form $\{(v_1,w),(v_2,w)\}$ (belonging to the rows)  belong to the other NOr-orbitals than the edge
$\{(v_1,w_1),(v_2,w_2)\}$ with $w_1 \ne w_2$.
Therefore, the edges in rows have the other colors than the rests of the edges.
The same is true for columns.
Thus, rows can be mapped only onto rows and columns can be mapped only onto columns.
This implies that $Aut(G(A \times B)) \subseteq A_1 \times B_1$, for some $A_1$ and $B_1$.
Let now $(a,b) \in Aut(G(A \times B)))\}$.
Then, $(a,b)(\{(v_1,w),(v_2,w)\})$ and $\{(v_1,w),(v_2,w)\}$ have the same color.
Therefore, there is $(a_1,b_1) \in A \times B$ such that
$(a_1,b_1)(\{(v_1,w),(v_2,w)\})=\{(v_1,w),(v_2,w)\}$.
Hence, $(a_1^{-1}a,b_1^{-1}b) \in Aut(G(A \times B))$ preserves the row with the edge $\{(v_1,w),(v_2,w)\}$.
Since every row is a copy of $G(A)$ (with other names of the colors), we have $a_1^{-1}a \in \bar{A}$ which implies that $a \in \bar{A}$.
In the similar way $b \in \bar{B}$
\end{proof}

\section{Results.}\label{gr1}

The aim of this section is to describe all the cases, where the condition
$A \times B = Aut(G(A \times B))$ holds.

We denote $A'=\bar{A} \setminus A$.
Let $a \in A'$.
Obviously, $a$ preserves $NOr$-orbitals of $A$.
We show the following lemma that will be used in a sequel.

\begin{Lemma}\label{or}
Let $A \notin (GR \cup \{I_2\})$.
If $a \in A'$, then $a$ preserves the orbits of $A$.
\end{Lemma}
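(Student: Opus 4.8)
The plan is to exploit the one property every $a \in A' \subseteq \bar{A} = Aut(G(A))$ enjoys: by Fact~\ref{f1} such an $a$ preserves each NOr-orbital of $A$ setwise. The structural observation driving the whole argument is that, because $A$ fixes each of its own vertex-orbits setwise, every NOr-orbital is confined to a single \emph{block}: if a pair $\{u,v\}$ has $u$ in the orbit $V_i$ and $v$ in the orbit $V_j$, then its entire NOr-orbital consists of pairs with one endpoint in $V_i$ and one in $V_j$. Hence $a$, preserving each NOr-orbital, must respect this block structure, and the task reduces to showing that $a$ maps each vertex-orbit $V_i$ onto itself.

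First I would dispose of the non-singleton orbits. Fix an orbit $V_i$ with $|V_i| \geq 2$ and pick distinct $u, v \in V_i$. The NOr-orbital of $\{u,v\}$ lies entirely inside $P_2(V_i)$, so since $a$ preserves it, $a(\{u,v\}) = \{a(u),a(v)\}$ also lies in $P_2(V_i)$, forcing $a(u), a(v) \in V_i$. Letting $\{u,v\}$ range over all pairs in $V_i$ gives $a(V_i) \subseteq V_i$; applying the same reasoning to $a^{-1} \in \bar{A}$ yields $a(V_i) = V_i$. Taking the union over all such orbits, $a$ then also preserves the set $F$ of fixed points (the singleton orbits) setwise, so it only remains to prove that $a$ fixes each point of $F$ individually.

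The fixed points carry the real content, and it is here that the hypotheses enter. When $|F| \geq 3$ I would use that any pair $\{f,f'\}$ of fixed points is a singleton NOr-orbital (as $A$ fixes both endpoints), so $a$ must fix $\{f,f'\}$ as a pair; assuming $a(f) = f' \neq f$ and pairing $f$ with a third fixed point then gives an immediate contradiction, so $a|_F = id$. The cases $|F| \leq 1$ are trivial. I expect the main obstacle to be the case $|F| = 2$, which is precisely where the exclusion of $I_2$ is forced: with $F = \{f_1,f_2\}$ the single within-$F$ pair is fixed by $a$ whether $a$ fixes or swaps $f_1,f_2$, so the within-$F$ pairs alone cannot exclude the swap. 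Since $A \neq I_2$, however, there must be a non-singleton orbit $V_j$; pairing $f_1$ with a vertex $u \in V_j$, the NOr-orbital of $\{f_1,u\}$ is exactly $\{\{f_1,u'\} : u' \in V_j\}$, and if $a$ swapped $f_1,f_2$ then $a(\{f_1,u\}) = \{f_2, a(u)\}$, with $a(u)\in V_j$, would have to lie in this orbital—impossible, since every pair there contains $f_1$ while $\{f_2,a(u)\}$ does not. This contradiction rules out the swap and completes the proof. (Note that $A \notin GR$ is used only to make $A'$ nonempty, so that the statement is not vacuous; the substantive exclusion is $I_2$.)
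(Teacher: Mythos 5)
Your proof is correct and rests on the same mechanism as the paper's: every $a \in A' \subseteq Aut(G(A))$ preserves each NOr-orbital, and each NOr-orbital is contained in the set of pairs joining a fixed pair of vertex-orbits, so preservation of orbitals forces preservation of orbits (with the $I_2$ exclusion entering exactly where you place it). The only difference is bookkeeping: the paper splits on whether a nontrivial orbit exists and routes all singleton orbits through the edges joining them to that orbit, while you split on the number of fixed points and additionally use the singleton orbitals $\{f,f'\}$; the underlying argument is the same.
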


\begin{proof}
Let $O_t$, $t \in \{1, \ldots, m\}$.
The lemma is nontrivial only for $m >1$
We consider two cases.
First, we assume that there is a nontrivial orbit of $A$.
We may assume that this is the orbit $O_1$.
Then, the set $P_2(O_1)$ is nonempty.
Moreover, it is clear that $P_2(O_1)$ is the sum of some number of orbitals of $A$.
Hence, the edges that belongs to $P_2(O_1)$ have different colors than the rest of the edges.
This implies that $a$ preserves the orbit $O_t$.
Moreover, it is also clear that for a fixed $t \in \{2, \ldots m\}$, the edges $\{v,w\}$, where $\{v,v\} \in O_1$ and $\{w,w\} \in O_t$ also have different colors than the rest of the edges.
Consequently, every orbit is preserved by $a$.

In the case, when all the orbits $O_t$ are trivial, we have $A =I_{|V|}$ and $|V|>2$. Obviously, in this case, $a(O_t)=O_t$.
\end{proof}

Now, we make a natural observation, which is an immediately implication of the definition, but it is crucial for this paper.

\begin{Observation}
Let $A$ be a permutation group. Then, $A \notin DGR$ if and only if there is $a \notin A$ such that $a$ preserves all Or-orbitals of $A$. Moreover, in such a case, $a$ belongs to $A'$.
\end{Observation}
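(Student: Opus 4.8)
The plan is to reduce the statement to the digraph analogue of Fact~\ref{f1}. Write $A^{(2)}$ for the group of all permutations of $V$ that fix every Or-orbital of $A$ setwise; clearly $A \subseteq A^{(2)}$ and $A^{(2)}$ is a group. First I would record that $A \in DGR$ if and only if $A = A^{(2)}$, which is the statement that $DGR$ consists exactly of the $2$-closed groups. One direction is immediate: coloring each ordered pair of distinct vertices by the index of its Or-orbital produces an edge-colored digraph $D(A)$ whose automorphism group is precisely $A^{(2)}$, so $A^{(2)} \in DGR$, and hence $A = A^{(2)}$ forces $A \in DGR$. For the converse, suppose $A = Aut(H)$ for an edge-colored digraph $H = (V,F)$. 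Since every element of $A$ preserves $F$, each color class of $F$ is a union of Or-orbitals of $A$; therefore any permutation that fixes all Or-orbitals preserves $F$ and lies in $Aut(H) = A$. This gives $A^{(2)} \subseteq A$, and with the reverse inclusion already noted we conclude $A = A^{(2)}$.

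With this characterization in hand, the equivalence is purely formal. The condition $A \notin DGR$ holds if and only if $A \neq A^{(2)}$, that is, if and only if $A \subsetneq A^{(2)}$, which is exactly the existence of some $a \in A^{(2)} \setminus A$; unfolding the definition of $A^{(2)}$, this says precisely that there is $a \notin A$ fixing every Or-orbital of $A$. This establishes both implications of the biconditional at once.

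For the ``moreover'' clause I would argue that such an $a$ automatically lands in $A' = \bar{A} \setminus A$. If $a$ fixes every Or-orbital, then for each unordered pair $\{v,w\}$ there is $b \in A$ with $(a(v),a(w)) = (b(v),b(w))$, so $\{a(v),a(w)\} = \{b(v),b(w)\}$ lies in the same NOr-orbital as $\{v,w\}$; hence $a$ fixes every NOr-orbital and $a \in Aut(G(A))$. By Fact~\ref{f1}, $Aut(G(A)) = \bar{A}$, so $a \in \bar{A}$, and since $a \notin A$ we get $a \in A'$. The argument has no genuinely hard step; the only point requiring care is the digraph version of Fact~\ref{f1} in the first paragraph (in particular the observation that the color classes of any representing digraph are unions of Or-orbitals), together with the elementary fact that fixing Or-orbitals forces fixing NOr-orbitals, which is what links the $DGR$ hypothesis back to the group $\bar{A}$ and hence to $A'$.
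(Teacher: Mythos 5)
Your proof is correct, and it is essentially the paper's approach: the paper states this Observation without proof as ``an immediate implication of the definition,'' and your argument is exactly the intended unfolding of the identification of $DGR$ with Wielandt's $2$-closed groups (your $A^{(2)}$), via the digraph analogue of Fact~\ref{f1}. The only step with real content, the ``moreover'' clause, you handle correctly by observing that preservation of all Or-orbitals forces preservation of all NOr-orbitals, hence $a \in Aut(G(A)) = \bar{A}$ by Fact~\ref{f1} and so $a \in A'$.
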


At least we  are ready to prove the first theorem.

\begin{Theorem} \label{NGR}
Let $A \notin DGR$ and $B$ be an arbitrary permutation group.
Then, $A \times B \notin DGR$.
\end{Theorem}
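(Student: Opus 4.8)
The plan is to use the characterization of $DGR$ supplied by the Observation: a group fails to lie in $DGR$ exactly when some permutation outside the group preserves all of its Or-orbitals. Since $A \notin DGR$, the Observation yields a permutation $a \notin A$ preserving every Or-orbital of $A$. I would then exhibit a single permutation of $V \times W$ witnessing $A \times B \notin DGR$, namely $(a,id)$, where $id$ is the identity of $B$; explicitly $(a,id)(v,w)=(a(v),w)$. Since $a \notin A$, the permutation $(a,id)$ cannot coincide with any $(a',b')$ having $a' \in A$, so $(a,id) \notin A \times B$. By the Observation applied to $A \times B$, it then suffices to prove that $(a,id)$ preserves every Or-orbital of $A \times B$.

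The key structural step is to understand the Or-orbitals of the product action. Because $A$ moves only the first coordinates and $B$ only the second, the orbit of a pair $((v_1,w_1),(v_2,w_2))$ under $A \times B$ splits as a product: it consists of all $((v_1',w_1'),(v_2',w_2'))$ with $(v_1',v_2')$ in the $A$-orbit of $(v_1,v_2)$ and $(w_1',w_2')$ in the $B$-orbit of $(w_1,w_2)$, where these are the orbits of $A$ and $B$ on ordered pairs (an honest Or-orbital when the two coordinates differ, and a ``diagonal'' orbit arising from an orbit on vertices when they coincide). Consequently $(a,id)$ preserves all Or-orbitals of $A \times B$ if and only if, for every pair $(v_1,v_2)\in V \times V$, the image $(a(v_1),a(v_2))$ lies in the $A$-orbit of $(v_1,v_2)$; the second coordinate is left fixed by $id$ and causes no difficulty.

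It remains to verify this last condition, which is where the real subtlety lies. When $v_1 \ne v_2$, the pair $(v_1,v_2)$ lies in an honest Or-orbital of $A$, which $a$ preserves by its choice, so $(a(v_1),a(v_2))$ stays in that Or-orbital. The delicate case is $v_1=v_2$: here I must know that $a$ respects the partition of $V$ into $A$-orbits, that is, that $a(v)$ always lies in $\mathrm{Orb}_A(v)$. I would derive this from the Or-orbital-preserving property: since $A \notin DGR$ forces $|V|\ge 2$, each vertex $v$ occurs as the first coordinate of some off-diagonal pair $(v,v')$ lying in an Or-orbital $O$; as all first coordinates of $O$ share a single $A$-orbit and $a$ fixes $O$ setwise, we get $a(v)\in\mathrm{Orb}_A(v)$. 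Hence $(a(v),a(v))$ stays in the diagonal orbit of $(v,v)$, completing the check, and the Observation then gives $A \times B \notin DGR$. The anticipated main obstacle is precisely this bookkeeping of the diagonal pairs: the hypothesis hands us only preservation of Or-orbitals, and one must first upgrade it to preservation of the vertex-orbit partition before the product orbits line up correctly.
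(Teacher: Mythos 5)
Your proof is correct and follows essentially the same route as the paper: the witness is the same permutation $(a,id_B)$, and the verification that it preserves every Or-orbital of $A\times B$ splits into the same two cases $v_1\ne v_2$ and $v_1=v_2$. The only difference is that in the diagonal case the paper invokes its Lemma~\ref{or} (orbit preservation deduced from NOr-orbital preservation), whereas you derive $a(v)\in\mathrm{Orb}_A(v)$ directly from the stronger hypothesis that $a$ preserves Or-orbitals --- a small, valid shortcut that keeps the argument self-contained.
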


\begin{proof}
Let $A = (A,V)$ and $B=(B,W)$.
Since $I_2 \in DGR$, $A \ne I_2$.
We define an edge-colored digraph $Gr(A \times B)$ in the similar way as $G(A \times B)$.
The only difference is that we color Or-orbits not NOr-orbits of the group $A \times B$.
This implies that $A \times B \subseteq Aut(Gr(A \times B)) \subseteq Aut(G(A \times B))$.
In the same way as for graphs we have that $A \times B \in DGR$ if and only if $A \times B = Aut(Gr(A \times B))$. (Follow Fact~\ref{f1} and Fact~\ref{f2}.)

Choose $a \in A'$ which preserves all Or-orbitals of $A$. 
Obviously, $a \in Aut(Gr(A \times B)$.
Let $id_B$ be the identity in the permutation group $B$.
We show that the permutation $(a,id_B)$ belongs to $Aut(Gr(A \times B))$.
To this end, we show that for every directed edge $e = ( (v_1,w_1),
(v_2,w_2) )$, where $v_1,v_2 \in V$, $w_1,w_2 \in W$,  the image
$(a,id_B)(e)$ has the same color as $e$.

Assume first $v_1 \ne v_2$.
We know that $a$ preserves every Or-orbital of $A$.
Thus, for every pair $(v_1,v_2)$, there is a permutation $a_2 \in A$ such that $a(v_1)=a_2(v_1)$ and $a(v_2)=a_2(v_2)$.
We have $(a,id_B)(e)=(a_2,id_B)(e)$, and therefore the directed edges
$(a,id_B)(e)$ and $e$ belong to the same Or-orbital of $A \times B$.
So, by the definition of the edge-colored digraph $Gr(A \times B)$,
$(a,id_B)(e)$ and $e$ have the same color.

If $v_1 = v_2$, since $A \ne I_2$, we may use Lemma~\ref{or} and find a permutation $a_1 \in A$ such that $a_1(v_1)=a(v_1)$.
We have $(a,id_B)(e)=(a_1,id_B)(e)$, and therefore the directed edges
$(a,id_B)(e)$ and $e$ belong to the same Or-orbital of $A \times B$.
So, they have the same color.

Thus, in all the cases $(a,id_B) \in Aut(Gr(A \times B))$ but $(a,id_B)$ does not belong to $A \times B$.
Therefore, $A \times B \notin DGR$.
\end{proof}

\begin{Corollary}\label{DGR}
$A \times B \in DGR$ if and only if $A, B \in DGR$.
\end{Corollary}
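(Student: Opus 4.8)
The plan is to derive the corollary directly from Theorem~\ref{NGR} together with the product theorem cited from \cite{je} in the introduction, treating the two implications separately.

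For the backward implication I would do essentially nothing new: assuming $A, B \in DGR$, the conclusion $A \times B \in DGR$ is exactly the statement of the (unnamed) Theorem recalled in the introduction, so a single citation suffices.

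For the forward implication I would argue by contraposition, showing that if $A \notin DGR$ or $B \notin DGR$ then $A \times B \notin DGR$. The case $A \notin DGR$ is literally Theorem~\ref{NGR}, with $B$ arbitrary. For the case $B \notin DGR$ I would reduce to the previous case using the symmetry of the product action. Concretely, the map $(x,y) \mapsto (y,x)$ is a bijection $V \times W \to W \times V$ that conjugates $(A \times B, V \times W)$ onto $(B \times A, W \times V)$, so these two are isomorphic as permutation groups. Since $DGR$ is defined as a class of automorphism groups of edge-colored digraphs and membership is considered only up to permutation-group isomorphism (as stated in the Preliminaries), $DGR$ is invariant under this relabeling. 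Applying Theorem~\ref{NGR} with $B$ now in the role of the non-$DGR$ first factor yields $B \times A \notin DGR$, and transporting back along the coordinate swap gives $A \times B \notin DGR$.

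The only genuinely non-automatic point is this symmetry reduction, since Theorem~\ref{NGR} privileges the first factor; the whole content lies in observing that the coordinate swap is a permutation isomorphism and that being $2$-closed (equivalently, being the automorphism group of some edge-colored digraph) is preserved by such isomorphisms. I expect this to be the main obstacle, and it is a routine one; everything else is a direct appeal to the two quoted results.
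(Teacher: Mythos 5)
Your proposal is correct and matches the paper's intended argument: the corollary is stated without a written proof precisely because it is the conjunction of Theorem~\ref{NGR} (applied to whichever factor fails to be in $DGR$, using that $A \times B$ and $B \times A$ are permutation isomorphic via the coordinate swap and that $DGR$-membership is invariant under such isomorphisms) with the product theorem from \cite{je} quoted in the introduction. Nothing further is needed.
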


This completes the problem of direct product in the edge-colored digraphs case.
However, in the case of edge-colored graphs, we have still a few cases to consider.
We write an obvious consequence of Theorem~\ref{NGR}.

\begin{Corollary}\label{GRN}
Let $A \notin DGR$ and $B$ be an arbitrary permutation group.
Then, $A \times B \notin GR$.
\end{Corollary}

According to the Corollary~\ref{GRN}, we will assume further that every considered group belongs to $DGR$.
First, we consider a few cases where $A \in DGR \setminus (GR \cup
\{I_2\})$ and $B \in GR$.

\begin{Lemma} \label{self-paired}
Let $A \in DGR \setminus (GR \cup \{I_2\})$ and $B \in GR$.
If every Or-orbital of $B$ is self-paired, then $A \times B \not\in GR$.
\end{Lemma}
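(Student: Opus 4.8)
The plan is to exhibit an explicit permutation lying in $Aut(G(A \times B))$ but outside $A \times B$. By Fact~\ref{f1} applied to the group $A \times B$ we have $Aut(G(A \times B)) = \overline{A \times B}$, and $A \times B \in GR$ if and only if $A \times B = \overline{A \times B}$; so producing one such permutation shows $A \times B \notin GR$. Since $A \notin GR$, the set $A' = \bar{A} \setminus A$ is nonempty, and I would fix some $a \in A'$. My candidate is $(a, id_B)$. It clearly does not belong to $A \times B$ because $a \notin A$, so the whole argument reduces to checking that $(a, id_B)$ preserves the coloring of $G(A \times B)$, i.e.\ that every edge $e = \{(v_1,w_1),(v_2,w_2)\}$ and its image $(a, id_B)(e) = \{(a(v_1),w_1),(a(v_2),w_2)\}$ lie in the same NOr-orbital of $A \times B$. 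Concretely, for each $e$ I must find $(c,d) \in A \times B$ with $(c,d)(e) = (a,id_B)(e)$.

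I would organize the verification by the position of the endpoints of $e$. If $v_1 = v_2 = v$ (a column edge), then since $A \notin GR \cup \{I_2\}$, Lemma~\ref{or} guarantees that $a$ preserves the orbits of $A$; hence $a(v)$ lies in the orbit of $v$, there is $c \in A$ with $c(v) = a(v)$, and taking $d = id_B$ gives $(c,id_B)(e) = (a,id_B)(e)$. If $v_1 \neq v_2$, let $O$ be the Or-orbital of $A$ containing $(v_1,v_2)$ and $O^{*}$ its paired orbital. Because $a \in A'$ preserves every NOr-orbital of $A$, the pair $(a(v_1),a(v_2))$ must lie in $O \cup O^{*}$. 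In the subcase $(a(v_1),a(v_2)) \in O$ there is $c \in A$ with $(c(v_1),c(v_2)) = (a(v_1),a(v_2))$, and again $d = id_B$ closes the case; this also covers row edges, where $w_1 = w_2$ is simply fixed by $id_B$.

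The genuinely delicate case, and the one the self-pairedness hypothesis on $B$ is designed for, is $v_1 \neq v_2$ with $(a(v_1),a(v_2)) \in O^{*}$, that is, $a$ reversing the Or-orbital of $(v_1,v_2)$. This is exactly the behaviour that keeps $a$ out of $A$, and no $c \in A$ alone can reproduce it on the $V$-coordinates without a simultaneous flip. Here I would use $B$ to supply that flip: from $(a(v_1),a(v_2)) \in O^{*}$ we get $(a(v_2),a(v_1)) \in O$, so there is $c \in A$ with $(c(v_1),c(v_2)) = (a(v_2),a(v_1))$; and since every Or-orbital of $B$ is self-paired, the orbit of $(w_1,w_2)$ contains $(w_2,w_1)$, giving $d \in B$ with $(d(w_1),d(w_2)) = (w_2,w_1)$. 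Then $(c,d)$ sends $(v_1,w_1) \mapsto (a(v_2),w_2)$ and $(v_2,w_2) \mapsto (a(v_1),w_1)$, so $(c,d)(e) = (a,id_B)(e)$ as unordered pairs, as needed. (When $w_1 = w_2$ the same $c$ works with $d = id_B$, so this reversing argument does no harm on row edges either.)

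Having matched $(a,id_B)$ with an element of $A \times B$ on every edge, I conclude that $(a,id_B)$ preserves all NOr-orbitals of $A \times B$, hence lies in $Aut(G(A \times B)) \setminus (A \times B)$, which finishes the proof. The main obstacle is precisely the reversing subcase above: membership in $A'$ means preserving unordered pairs while reversing some ordered pair, and the essential point is that the forced reversal on the $W$-side can always be absorbed by $B$ exactly when all of $B$'s Or-orbitals are self-paired. The remaining steps — the orbit argument for column edges and the book-keeping that $a$ is injective, so no edge degenerates — are routine.
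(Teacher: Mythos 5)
Your proposal is correct and follows essentially the same route as the paper: take $a \in A'$, show $(a,\mathrm{id}_B)$ preserves every NOr-orbital of $A \times B$ by casing on whether the edge is a column edge (Lemma~\ref{or}), a pair whose $V$-projection is preserved by $a$ within its Or-orbital, or one that $a$ reverses, in which case the self-pairedness of $B$'s Or-orbitals supplies the compensating flip on the $W$-coordinates. The case organization differs cosmetically (you fold the row edges into the general $v_1 \neq v_2$ analysis), but the argument is the paper's.
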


\begin{proof}
Let $A=(A,V)$ and $B=(B,W)$. We fix an element $a \in A'$.
Let $id_B$ be the identity in the permutation group $B$, and $e=\{
(v_1,w_1),(v_2,w_2) \}$, where
$v_1,v_2 \in V$, $w_1,w_2 \in W$.
We show that the edges $e$ and $(a,id_B)(e)$ have the same color.
To this end it is enough to prove that $(a,id_B)(e)$ belongs to the same
NOr-orbital of $A \times B$ as $e$ does.

If $w_1 = w_2$, then the statement holds according to the fact that $a$ preserves all NOr-orbitals of $A$.
Assume $v_1 = v_2$. Since $A \ne I_2$, by Lemma~\ref{or}, $a$ preserves all orbits of $A$ (in action on $V$).
Hence, there is $a_1 \in A$ such that $a(v_1)=a_1(v_1)$.
We have,
\begin{eqnarray}
\nonumber &(a,id_B)(\{(v_1,w_1),(v_1,w_2)\})=\{(a(v_1),w_1),(a(v_1),w_2)\}=
&\\
\nonumber & (a_1,id_B)(\{(v_1,w_1),(v_1,w_2)\}). &
\end{eqnarray}
Thus, $e$ and $(a,id_B)(e)$ belong to the same NOr-orbital of $A \times B$.

Let now $v_1 \ne v_2$ and $w_1 \ne w_2$.
If the pair $a((v_1,v_2))$ belongs to the same Or-orbital of $A$ as the pair $(v_1,v_2)$, then there is $a_1 \in A$ such that $a_1(v_1) = a(v_1)$ and $a_1(v_2) = a(v_2)$.
Similarly as above, we have,
\begin{eqnarray}
\nonumber &(a,id_B)(\{(v_1,w_1),(v_2,w_2)\})=\{(a(v_1),w_1),(a(v_2),w_2)\}=
&\\
\nonumber & (a_1,id_B)(\{(v_1,w_1),(v_2,w_2)\}). &
\end{eqnarray}

Assume, finally, that $v_1 \ne v_2$, $w_1 \ne w_2$ and the pairs
$a((v_1,v_2))$, $(v_1,v_2)$ belong to the different Or-orbitals of $A$.
Since $a \in \bar{A}$, we know that $a$ preserves all Nor-orbitals of $A$.
This implies that, the pairs $a((v_1,v_2))$ and $(v_2,v_1)$ belong to the same Or-orbital of $A$.
Hence, there is $a_1 \in A$ such that $a_1((v_2,v_1))=a((v_1,v_2))$.
Moreover, since all Or-orbitals of $B$ are self-paired, there is $b \in B$ such that $b((w_1,w_2))=(w_2,w_1)$.
Consequently,
$$(a,id_B)(e)= \{ (a_1(v_2),b(w_2)), (a_1(v_1),b(v_1))\} = (a_1,b)(e).$$
Thus $(a,id_B)(e)$ and $e$ belongs to the same NOr-orbital of $A \times B$.
Thus $(a,id_B)$ does not change the color of the edges.
Therefore, $(a,id_B) \in Aut(G(A \times B))$.
Thus, $(a,id_B) \in \overline{A \times B}$.
Since $a \in A'$, $(a,id_B) \notin A \times B$. 
Therefore, $A \times B \ne
\overline{A \times B}$ and $A \times B \notin GR$.
This completes the proof.
\end{proof}

The examples of groups with all Or-orbitals self-paired are $S_n$ and their transitive products (direct, wreath, ...), in particular all groups $S_2^p$.
Other examples are totally symmetric groups that are described in
\cite{grekis2}.
In fact every such a group of permutations $A$ has the property that if $A
=Aut(G)$ for a digraph $G$, then $G$ is undirected (simple) graph.

A different situation is when a permutation group $B \in GR$ has some non-self-paired Or-orbital and $A \in DGR \setminus (GR \cup \{I_2\})$.

\begin{Lemma} \label{not-self-paired}
Let $A \in DGR \setminus (GR \cup \{I_2\})$ and $B \in GR$ have at least one not self-paired Or-orbital.  
Then, $A \times B \in GR$.
\end{Lemma}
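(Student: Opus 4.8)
The plan is to prove the stronger equality $Aut(G(A \times B)) = A \times B$, which gives $A \times B \in GR$ since $Aut(G(A\times B)) = \overline{A\times B}$ by Fact~\ref{f1}. First I would invoke Fact~\ref{f2} together with $\bar{B}=B$ (as $B\in GR$) to write any element of $Aut(G(A\times B))$ as $(a,b)$ with $a\in\bar{A}$ and $b\in B$. Since $(id_A,b)\in A\times B\subseteq Aut(G(A\times B))$ and the latter is a group, the product $(a,id_B)=(a,b)(id_A,b)^{-1}$ also lies in it. Hence it suffices to show that no $(a,id_B)$ with $a\in A'$ is an automorphism of $G(A\times B)$: this forces $a\in A$ and therefore $(a,b)\in A\times B$.

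Next I would fix $a\in A'$ and locate where it behaves in a genuinely \emph{directed} way. Because $a\in\bar{A}$ preserves every NOr-orbital, it sends each Or-orbital into the union of that orbital and its pair; in particular every self-paired Or-orbital is fixed setwise (a finite set injectively mapped into itself). On the other hand $A\in DGR$, so by the Observation the element $a\notin A$ must fail to preserve some Or-orbital, which therefore has to be non-self-paired. This produces a non-self-paired Or-orbital $O$ of $A$, with pair $O^{*}\neq O$, and a pair $(v_1,v_2)\in O$ such that $(a(v_1),a(v_2))\in O^{*}$. By hypothesis $B$ has a non-self-paired Or-orbital $Q$; I fix $(w_1,w_2)\in Q$, so that $Q^{*}\neq Q$ and $w_1\neq w_2$.

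Then I would test the edge $e=\{(v_1,w_1),(v_2,w_2)\}$, which is genuine since $v_1\neq v_2$. For the product action the Or-orbital of $((x_1,y_1),(x_2,y_2))$ is the product set $R\ast S:=\{((x_1',y_1'),(x_2',y_2')):(x_1',x_2')\in R,\ (y_1',y_2')\in S\}$ of the Or-orbitals $R,S$ of its two coordinates, and $R\ast S$ determines $R$ and $S$ by projection. Thus $e$ lies in the NOr-orbital $\{O\ast Q,\ O^{*}\ast Q^{*}\}$, whereas $(a,id_B)(e)=\{(a(v_1),w_1),(a(v_2),w_2)\}$ lies in $\{O^{*}\ast Q,\ O\ast Q^{*}\}$. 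Since $O\neq O^{*}$ and $Q\neq Q^{*}$, the orbital $O\ast Q$ equals neither $O^{*}\ast Q$ nor $O\ast Q^{*}$, so it is absent from the second pair and the two NOr-orbitals differ. Hence $e$ and $(a,id_B)(e)$ receive different colors, $(a,id_B)\notin Aut(G(A\times B))$, and the reduction of the first paragraph completes the proof.

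The step I expect to be the crux is the middle one: recognising that $a$ is forced to fix every self-paired orbital, so that the orbital it must disturb (guaranteed by $A\in DGR$ via the Observation) is necessarily non-self-paired and is swapped with its pair. It is exactly this genuine \emph{direction} present in $A$, played off against a genuinely directed orbital $Q$ of $B$, that prevents $(a,id_B)$ from preserving colors, in sharp contrast to the self-paired situation of Lemma~\ref{self-paired}. Once this is in place, verifying that the four product orbitals $O\ast Q$, $O^{*}\ast Q$, $O\ast Q^{*}$, $O^{*}\ast Q^{*}$ are pairwise distinct is a routine consequence of the product description of Or-orbitals.
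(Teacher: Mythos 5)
Your proposal is correct and follows essentially the same route as the paper: both arguments reduce to an element $(a,b)$ with $a\in A'$, pick an ordered pair $(v_1,v_2)$ that $a$ sends out of its (necessarily non-self-paired) Or-orbital and a pair $(w_1,w_2)$ in a non-self-paired Or-orbital of $B$, and show the edge $\{(v_1,w_1),(v_2,w_2)\}$ changes NOr-orbital, hence color. Your preliminary reduction to $(a,id_B)$ and the explicit product-orbital bookkeeping $O\ast Q$ versus $\{O^{*}\ast Q, O\ast Q^{*}\}$ are just a cleaner packaging of the paper's two-case "either $a_1,b_1$ exist or $a_2,b_2$ exist" analysis.
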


\begin{proof}
Let $A=(A,V)$ and $B=(B,W)$.
We know that $Aut(G(A \times B)) \subseteq \bar{A} \times B$.
Therefore, every $\s \in Aut(G(A \times B))$ has a form $(a,b)$, where $a
\in \bar{A}$ and $b \in B$.
We show that, in fact, $a$ always belongs to $A$.
Assume, to the contrary, that $a \in A'$. In this case, since $A \in DGR
\setminus (GR \cup \{I_2\})$, there is (an ordered) pair $(v_1,v_2),
v_1,v_2 \in V$ such that $a((v_1,v_2)) \not= a_1((v_1,v_2))$, for every $a_1 \in A$.
Since $B$ has an Or-orbital which is not self-paired, there are $w_1, w_2
\in W$ such that $b((w_1,w_2)) \not= (w_2,w_1)$ for every $b \in B$.
Now, observe that the edges $(a,b)(\{ (v_1,w_1),(v_2,w_2)\})$ and $\{
(v_1,w_1),(v_2,w_2)\}$ belong to different NOr-orbitals of $A \times B$.
Indeed, if the edges $(a,b)(\{ (v_1,w_1),(v_2,w_2)\})$ and $\{
(v_1,w_1),(v_2,w_2)\}$ belong to the same NOr-orbital of $A \times B$, then either there are $a_1 \in A$ and $b_1 \in B$ such that
$a((v_1,v_2))=a_1((v_1,v_2))$ and $b((w_1,w_2))=b_1((w_1,w_2))$ or there are $a_2 \in A$ and $b_2 \in B$ such that $a((v_1,v_2))=a_2((v_2,v_1))$ and
$b((w_1,w_2))=b_2((w_2,w_1))$. The former case is impossible by assumption on $a$, the other, since it would imply $b_2^{-1}b((w_1,w_2))=(w_2,w_1)$, which was forbidden.
This implies that $E( (a,b)(\{ (v_1,w_1),(v_2,w_2)\} )) \not=$ $E( \{
(v_1,w_1),(v_2,w_2)\} )$, which contradicts the fact that $(a,b) \in Aut(G(A \times B))$.
Consequently, we have $Aut(G(A \times B)) \subseteq A \times B$.
This completes the proof.
\end{proof}

We summarize Lemma \ref{self-paired} and Lemma \ref{not-self-paired}.

\begin{Corollary} \label{Nie-Tak}
Let $A \in  DGR \setminus (GR \cup \{I_2\})$ and $B \in GR$. 
Then, $A \times B \in GR$ if and only if there exists a non-self-paired Or-orbital of $B$. \hfill \cnd
\end{Corollary}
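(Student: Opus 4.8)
The plan is to observe that the statement is a biconditional whose two implications are, respectively, Lemma~\ref{not-self-paired} and the contrapositive of Lemma~\ref{self-paired}, so that nothing beyond combining these two results is needed. Since the corollary is explicitly presented as a summary, I would keep the argument short and simply verify that the hypotheses line up and that the relevant dichotomy is exhaustive.

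First I would handle the implication from right to left. Assume that $B$ possesses a non-self-paired Or-orbital. The standing hypotheses here, namely $A \in DGR \setminus (GR \cup \{I_2\})$ and $B \in GR$, coincide exactly with those of Lemma~\ref{not-self-paired}, so that lemma applies verbatim and yields $A \times B \in GR$. No fresh computation is required.

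For the converse I would argue by contraposition. Suppose instead that \emph{every} Or-orbital of $B$ is self-paired. Then the hypotheses of Lemma~\ref{self-paired} are met, and that lemma gives $A \times B \notin GR$. Equivalently, if $A \times B \in GR$ then not every Or-orbital of $B$ can be self-paired, which is precisely to say that $B$ admits at least one non-self-paired Or-orbital.

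The two conditions ``every Or-orbital of $B$ is self-paired'' and ``some Or-orbital of $B$ is non-self-paired'' are plainly complementary and exhaustive, so the two lemmas together cover all cases and establish the biconditional. I expect no genuine obstacle: the only points demanding any care are to confirm that the hypotheses of both lemmas match those of the corollary, which they do, and that the dichotomy on Or-orbitals is exhaustive, which is immediate.
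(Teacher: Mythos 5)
Your proposal is correct and matches the paper exactly: the paper presents this corollary as a direct summary of Lemma~\ref{self-paired} and Lemma~\ref{not-self-paired}, offering no further argument. Your observation that the two lemmas cover the complementary and exhaustive cases is precisely the intended (and only needed) content.
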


Now, we have to consider one special case.

\begin{Theorem}\label{I_2}
Let $B \in (GR \cup \{I_2\})$. 
Then, $B \times I_2 \in GR$.
\end{Theorem}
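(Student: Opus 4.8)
The plan is to show directly that the canonical graph $G(B\times I_2)$ satisfies $Aut(G(B\times I_2)) = B\times I_2$; by Fact~\ref{f1} this is exactly the assertion $B\times I_2\in GR$. Write $B=(B,V)$ and $I_2=(\{id\},W)$ with $W=\{w_1,w_2\}$. Fact~\ref{f2} already gives $B\times I_2 \subseteq Aut(G(B\times I_2)) \subseteq \bar{B}\times \overline{I_2}$. Since the only NOr-orbital of $I_2$ is the single pair $\{w_1,w_2\}$, the graph $G(I_2)$ is one monochromatic edge, and hence $\overline{I_2}=S_2$ by Fact~\ref{f1}. Thus every automorphism has the form $(a,c)$ with $a\in\bar B$ and $c\in S_2$, and it remains to force $c=id$ and $a\in B$. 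I would split according to whether $B\in GR$ or $B=I_2$, these being the only two possibilities under the hypothesis (recall that $\overline{I_2}=S_2\ne I_2$, so $I_2\notin GR$).

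In the main case $B\in GR$ we have $\bar B=B$, so $a\in B$ is automatic and only the $S_2$-coordinate must be controlled. The key observation is that the two ``copies'' $V\times\{w_1\}$ and $V\times\{w_2\}$ cannot be interchanged by any color-preserving map. Indeed, since $I_2$ contains no transposition of $W$, an edge $\{(v_1,w_1),(v_2,w_1)\}$ lying inside the first copy is mapped inside that copy by every element of $B\times I_2$; hence its NOr-orbital is disjoint from the NOr-orbital of the corresponding edge $\{(v_1,w_2),(v_2,w_2)\}$ in the second copy, and the two receive \emph{different} colors in $G(B\times I_2)$. Because $B\ne I_1$ we have $|V|\ge 2$, so such intra-copy edges exist. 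Consequently the transposition $c$, which sends first-copy edges to second-copy edges, cannot preserve colors, forcing $c=id$ and therefore $(a,c)=(a,id)\in B\times I_2$. This gives $Aut(G(B\times I_2))\subseteq B\times I_2$, and together with Fact~\ref{f2} the reverse inclusion yields equality.

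In the degenerate case $B=I_2$ we have $B\times I_2=I_4$, the trivial group on the four vertices $V\times W$, and the generic reduction is weaker because here $\bar B=\overline{I_2}=S_2\ne B$. I would instead argue directly: every one of the $\binom{4}{2}=6$ unordered pairs is its own NOr-orbital of $I_4$, so $G(I_4)$ colors all six edges with pairwise distinct colors. A color-preserving permutation must then fix every pair setwise, and a standard short argument (if $\sigma(x)=y\ne x$, choose a third vertex $z$ and use that $\{x,z\}$ is fixed setwise to reach a contradiction) shows $\sigma=id$. Hence $Aut(G(I_4))=I_4=I_2\times I_2$, so $I_2\times I_2\in GR$.

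I expect the only genuine subtlety to be this degenerate case: when $B=I_2$ the bound from Fact~\ref{f2} no longer localizes the automorphisms inside $B\times S_2$ (since $\bar B=S_2$), so the clean ``kill the swap'' argument of the main case does not apply, and the direct all-distinct-colors computation is required instead. Everything else reduces to routine bookkeeping with the NOr-orbitals of $B\times I_2$.
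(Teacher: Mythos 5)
Your proof is correct and follows essentially the same route as the paper: split into the cases $B\in GR$ and $B=I_2$, use $Aut(G(B\times I_2))\subseteq \bar B\times S_2$ together with the fact that intra-copy edges in the two copies of $V$ receive different colors to kill the swap, and treat $I_2\times I_2=I_4$ separately. The only difference is cosmetic: the paper simply cites that $I_4\in GR(3)$ is known and easy to check, whereas you verify $I_4\in GR$ directly from the all-distinct-colors graph $G(I_4)$, which is a perfectly valid (if less color-economical) substitute.
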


\begin{proof}
It is known, and easy to check, that $I_4 = I_2 \times I_2 \in GR(3)$.
Thus assume that $B \in GR$.
We know that $Aut(G(B \times I_2))$ is either $B \times I_2$ or $B \times S_2$.
By our general assumption $B \ne I_1$, hence there is at least one edge of
the form $\{(v,0),(w,0)\}$ and has different color then $\{(v,1),(w,1)\}$.
Thus, the latter is excluded.
\end{proof}

This completes the description in all the cases where at least one of the components belongs to $GR$.
However, for some subclasses of $GR$ we may state the result in a little nicer form.
Since all intransitive permutation groups have a non-self-paired Or-orbit, we have the following.

\begin{Corollary}
Let $A \in (DGR \setminus GR)$, and $B \in GR$ be intransitive. 
Then, $A \times B \in GR$. \hfill $\Box$
\end{Corollary}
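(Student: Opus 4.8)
The plan is to reduce this corollary to the two results already established, splitting on whether $A$ equals $I_2$. Since $A \in DGR \setminus GR$, there are exactly two possibilities: either $A = I_2$ (note that $I_2 \in DGR \setminus GR$, because $\overline{I_2} = S_2 \ne I_2$), or $A \in DGR \setminus (GR \cup \{I_2\})$. Each case is settled by a different earlier statement, so the case split is not merely cosmetic.

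For the first case, $A = I_2$, I would observe that $A \times B = I_2 \times B$ is permutation-isomorphic to $B \times I_2$. Since $B \in GR$, Theorem~\ref{I_2} applies directly and gives $B \times I_2 \in GR$, hence $A \times B \in GR$.

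For the second case, the key point is the remark preceding the statement: an intransitive group always has a non-self-paired Or-orbital, and I would make this explicit. Pick two distinct orbits $W_1, W_2$ of $B$ and elements $w_1 \in W_1$, $w_2 \in W_2$. Because $B$ preserves its orbits, every element of the Or-orbital of $(w_1, w_2)$ has first coordinate in $W_1$ and second coordinate in $W_2$; since $(w_2, w_1)$ has first coordinate in $W_2 \ne W_1$, it cannot lie in that orbital. Thus the Or-orbital of $(w_1, w_2)$ is not self-paired. With $A \in DGR \setminus (GR \cup \{I_2\})$ and $B \in GR$ possessing a non-self-paired Or-orbital, Corollary~\ref{Nie-Tak} yields $A \times B \in GR$.

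I do not expect a substantive obstacle, since the genuine work has already been done in Lemma~\ref{not-self-paired} and Theorem~\ref{I_2}. The only point requiring care is not to overlook the possibility $A = I_2$: Corollary~\ref{Nie-Tak} explicitly excludes $I_2$, so that boundary case must be routed through Theorem~\ref{I_2} instead. Verifying that an intransitive $B$ really forces a non-self-paired orbital is the one small argument that should be spelled out, but it is elementary once one tracks coordinates across distinct orbits as above.
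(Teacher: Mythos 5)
Your proof is correct and follows essentially the same route as the paper, which justifies this corollary solely by the observation that an intransitive group has a non-self-paired Or-orbital and then invokes Corollary~\ref{Nie-Tak}. Your additional handling of the boundary case $A = I_2$ (via Theorem~\ref{I_2}) is a point the paper leaves implicit, and your explicit verification that distinct orbits force a non-self-paired Or-orbital correctly fills in the one-line remark preceding the statement.
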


Moreover, it is easy to observe that the only regular groups with all self-paired Or-orbitals are $S_2^n, n \geq 1$.
This implies that:

\begin{Corollary}
Let $A \in (DGR \setminus GR)$, and $B \in GR$ be regular. Then,
$A \times B \in GR$ if and only if $B \not= S_2^n$, for every $n$. \hfill
$\Box$
\end{Corollary}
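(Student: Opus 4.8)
The plan is to reduce this final Corollary to the already-established Corollary~\ref{Nie-Tak}, so the only real work is a purely group-theoretic characterization of regular permutation groups whose every Or-orbital is self-paired. First I would recall the setup: $A \in DGR \setminus GR$ means in particular $A \notin GR$, and since $B$ is a regular group acting on more than one point (recall $B \ne I_1$ by the standing assumption), we have $B \in GR$ as hypothesized, so Corollary~\ref{Nie-Tak} applies provided $A \notin \{I_2\}$. I would note that $I_2 \in GR$, so $A \in DGR \setminus GR$ already rules out $A = I_2$, and thus Corollary~\ref{Nie-Tak} tells us directly that $A \times B \in GR$ if and only if $B$ has a non-self-paired Or-orbital. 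Hence the entire statement follows once I show the following claim: \emph{a regular permutation group $B$ has all its Or-orbitals self-paired if and only if $B \cong S_2^n$ for some $n \geq 1$.}

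The heart of the matter is therefore this claim about regular groups, which I would prove as follows. For a regular group $(B,W)$, fix a base point and identify $W$ with $B$ itself via the regular action, so that each Or-orbital corresponds bijectively to a non-identity group element $g$ (the orbital being $\{(x, xg) : x \in B\}$ in one natural convention). Under this identification, the Or-orbital associated with $g$ is self-paired exactly when the orbital associated with $g^{-1}$ equals it, which happens precisely when $g$ and $g^{-1}$ generate the same orbital. The key computation is that the pairing of orbitals corresponds to inversion in the group: the orbital of $g$ is self-paired if and only if $g$ is conjugate to $g^{-1}$ in a suitable sense compatible with the regular action. For a regular action the cleanest statement is that \emph{every} Or-orbital is self-paired if and only if every non-identity element $g \in B$ satisfies $g = g^{-1}$, i.e.\ $g^2 = 1$ for all $g$. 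A group in which every element is an involution is an elementary abelian $2$-group, hence isomorphic to $(\mathbb{Z}/2)^n$; and its regular representation is permutation-isomorphic to $S_2^n$. Conversely, $S_2^n$ is the regular representation of an elementary abelian $2$-group, where inversion is trivial, so every Or-orbital is self-paired.

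I would then assemble the pieces: combining the claim with Corollary~\ref{Nie-Tak}, the product $A \times B$ fails to be in $GR$ precisely when $B$ has no non-self-paired Or-orbital, which by the claim is precisely when $B = S_2^n$ for some $n$. Contrapositively, $A \times B \in GR$ if and only if $B \neq S_2^n$ for every $n$, which is exactly the assertion.

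The step I expect to be the main obstacle is pinning down the correspondence between Or-orbitals of a regular group and group elements, and in particular verifying carefully that the \emph{pairing} operation on orbitals translates exactly into \emph{inversion} $g \mapsto g^{-1}$ on the corresponding elements. One must be attentive to whether the regular action is taken on the left or the right, since this affects which element indexes a given orbital, though the conclusion that self-pairedness of all orbitals is equivalent to $g^2 = 1$ for all $g$ is robust to this choice. The remaining inference — that a group of exponent $2$ is elementary abelian and that its regular representation is $S_2^n$ — is standard, so I would state it briefly rather than prove it in detail.
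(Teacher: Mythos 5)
Your overall route is the same as the paper's: the paper merely remarks that the only regular groups with all Or-orbitals self-paired are $S_2^n$ and lets Corollary~\ref{Nie-Tak} do the rest, and your expansion of that remark is correct --- for a regular group the Or-orbital $\{(x,xg): x\}$ determines $g$ uniquely and is paired with the orbital of $g^{-1}$, so all orbitals are self-paired iff every element satisfies $g^2=1$, i.e.\ the group is elementary abelian of exponent $2$ and its regular representation is $S_2^n$. (The detour through ``$g$ conjugate to $g^{-1}$'' is unnecessary; in a regular action equality of the two orbitals forces $g=g^{-1}$ outright.)

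There is, however, one genuinely false step: you claim that $I_2 \in GR$ in order to rule out $A = I_2$. In fact $I_2 \notin GR$: the complete graph on two vertices has a single edge, so every edge-coloring of it is preserved by the transposition and the automorphism group of any edge-colored graph on two points is $S_2$. This is precisely why the paper writes $DGR \setminus (GR \cup \{I_2\})$ in Corollary~\ref{Nie-Tak} and handles $I_2$ separately in Theorem~\ref{I_2}. Consequently the hypothesis $A \in DGR \setminus GR$ does not exclude $A = I_2$, and Corollary~\ref{Nie-Tak} cannot be invoked in that case. The case is not vacuous either: by Theorem~\ref{I_2} we have $I_2 \times S_2^n \in GR$, so for $A = I_2$ and $B = S_2^n$ the ``only if'' direction of the statement actually fails. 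The corollary is evidently meant with $A \in DGR \setminus (GR \cup \{I_2\})$, matching Theorem~\ref{GR}(ii); you should either impose that hypothesis explicitly or treat $A = I_2$ as a separate exceptional case, rather than dispose of it with a false premise.
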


The remaining case occurs where $A,B \in (DGR \setminus GR)$. We start with the following.

\begin{Lemma} \label{stabilizowane-orbity}
Let $A,B \in (DGR \setminus GR)$.
If for every $b \in B'$ there exists a pair of paired Or-orbitals $O_1 \ne
O_2$ of $B$ such that $b$ does not pairs $O_1$ and $O_2$, then $A \times B \in GR$.
\end{Lemma}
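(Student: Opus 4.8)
The plan is to combine Fact~\ref{f2}, which gives $A \times B \subseteq Aut(G(A \times B)) \subseteq \bar A \times \bar B$, with Fact~\ref{f1} applied to $A\times B$, so that $Aut(G(A\times B)) = \overline{A\times B}$. Every $\sigma \in Aut(G(A\times B))$ then has the form $\sigma=(a,b)$ with $a\in\bar A$ and $b\in\bar B$, and it suffices to prove that necessarily $a\in A$ and $b\in B$; this forces $Aut(G(A\times B))=A\times B\in GR$. Before the two main steps I would record a preliminary observation used throughout: since $A\in DGR\setminus GR$, the group $A$ has at least one non-self-paired Or-orbital (if every Or-orbital of $A$ were self-paired, then preserving all NOr-orbitals would coincide with preserving all Or-orbitals, so $\bar A$ would equal the set of permutations preserving every Or-orbital of $A$, which is $A$ because $A\in DGR$; this would give $A=\bar A\in GR$, a contradiction). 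The same holds for $B$. It is convenient to say that, for $v_1\neq v_2$, a permutation $a\in\bar A$ is \emph{direct} on $(v_1,v_2)$ if $(a(v_1),a(v_2))$ lies in the Or-orbital of $(v_1,v_2)$ and \emph{reversing} if it lies in the paired Or-orbital; since $a$ preserves NOr-orbitals at least one case occurs, and for a non-self-paired pair exactly one occurs. For a ``diagonal'' edge $e=\{(v_1,w_1),(v_2,w_2)\}$ with $v_1\neq v_2$, $w_1\neq w_2$, the edges $e$ and $(a,b)(e)$ lie in the same NOr-orbital of $A\times B$ if and only if either $a$ is direct on $(v_1,v_2)$ and $b$ is direct on $(w_1,w_2)$, or $a$ is reversing on $(v_1,v_2)$ and $b$ is reversing on $(w_1,w_2)$.

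\textbf{Step 1} is the heart of the argument and the only place where the hypothesis on $B$ enters: I would show $b\in B$. Suppose instead $b\in B'$. The hypothesis yields paired Or-orbitals $O_1\neq O_2$ of $B$ with $b(O_1)\neq O_2$; since $b\in\bar B$ forces $b(O_1)\subseteq O_1\cup O_2$, there is a pair $(w_1,w_2)\in O_1$ with $b(w_1,w_2)\in O_1$, that is, a non-self-paired pair on which $b$ is direct. On the other hand, $b\in B'$ and $B\in DGR$ imply (by the Observation) that $b$ fails to preserve some Or-orbital, so there is a non-self-paired pair $(w_1',w_2')$ on which $b$ is reversing. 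Now fix a non-self-paired pair $(v_1,v_2)$ of $A$, available by the preliminary observation, and apply the edge criterion to $\{(v_1,w_1),(v_2,w_2)\}$ and to $\{(v_1,w_1'),(v_2,w_2')\}$. The first edge, where $b$ is direct and not reversing, forces $a$ to be direct on $(v_1,v_2)$; the second, where $b$ is reversing and not direct, forces $a$ to be reversing on $(v_1,v_2)$. Since $(v_1,v_2)$ is non-self-paired this is impossible, and the contradiction gives $b\in B$.

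\textbf{Step 2}: once $b\in B$ is known, showing $a\in A$ is routine and needs only that $B$ has a non-self-paired Or-orbital. Indeed $b\in B$ preserves every Or-orbital of $B$, hence is direct on every pair. Choosing a non-self-paired pair $(w_1,w_2)$ of $B$ and an arbitrary pair $v_1\neq v_2$ of $V$, the edge criterion applied to $\{(v_1,w_1),(v_2,w_2)\}$ excludes the reversing alternative for $b$ (it is direct and not reversing there), and therefore forces $a$ to be direct on $(v_1,v_2)$. As this holds for every pair, $a$ preserves every Or-orbital of $A$, and since $A\in DGR$ the Observation gives $a\in A$. Combining the two steps yields $Aut(G(A\times B))\subseteq A\times B$, hence equality, so $A\times B\in GR$.

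I expect the main obstacle to be the asymmetry of the statement: the hypothesis is available only for $B$, so one cannot symmetrically dispatch $a$ first. The delicate point is the correct extraction, from the condition that $b$ \emph{does not pair} $O_1$ and $O_2$, of a non-self-paired pair on which $b$ acts \emph{directly}, to be set against the non-self-paired pair on which every $b\in B'$ is automatically \emph{reversing}; feeding both, through two diagonal edges, into one and the same non-self-paired pair of $A$ is exactly what produces the contradiction. The secondary point underpinning everything is that $A,B\in DGR\setminus GR$ is precisely what guarantees the non-self-paired Or-orbitals used in both steps.
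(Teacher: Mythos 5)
Your proof is correct and follows essentially the same route as the paper's: both rest on the same criterion for when a diagonal edge $\{(v_1,w_1),(v_2,w_2)\}$ and its image lie in the same NOr-orbital of $A\times B$, and on the same witness pairs (a non-self-paired Or-orbital of $A$, a pair on which $b$ acts directly extracted from the hypothesis that $b$ does not pair $O_1$ and $O_2$, and a pair on which $b$ reverses extracted from $B\in DGR$ via the Observation). The only difference is organizational: the paper argues by contradiction splitting on $a\in A$ versus $a\notin A$ and exhibits in each case a single edge whose color changes, whereas you first eliminate $b\in B'$ by playing two edges against one non-self-paired pair of $A$ and then force $a\in A$; the underlying mathematics is identical.
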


\begin{proof}
Let $A=(A,V)$ and $B=(B,W)$.
Assume to the contrary that there exists $(a,b) \in Aut(G(A\times B))
{\setminus} (A \times B)$.

First, assume that $a \in A$; then, $b \notin B$. Since $A \in (DGR
\setminus GR)$, there is (an ordered) pair $(v_1,v_2)$, where $v_1,v_2 \in
V$, which belongs to a non-self paired Or-orbital of $A$. Since $B \in
DGR$, there is (an ordered) pair $(w_1,w_2)$ where $w_1,w_2 \in W$, for which there is no $b_1 \in B$ such that $b_1((w_1,w_2))=b((w_1,w_2))$.
We prove that the edge $\{(v_1,w_1),(v_2,w_2)\}$ belongs to a different
 NOr-orbital than the edge $(a,b)(\{(v_1,w_1),(v_2,w_2)\})$.
Indeed, if the edges $(a,b)(\{(v_1,w_1),(v_2,w_2)\})$ and
$\{(v_1,w_1),(v_2,w_2)\}$ belong to the same Nor-orbit, then either there are $a_1 \in A$ and $b_1 \in B$ such that $a((v_1,v_2)) = a_1((v_1,v_2))$ and $b((w_1,w_2))=b_1((w_1,w_2))$ or there are $a_2 \in A$ and $b_2 \in B$ such that $a((v_1,v_2)) = a_2((v_2,v_1))$ and
$b((w_1,w_2))=b_2((w_2,w_1))$.
In the former, by assumption on $b$ and $w_1,w_2$, this is impossible.
In the other, since $a \in A$ it is also impossible.
Hence, the edges $(a,b)(\{(v_1,w_1),(v_2,w_2)\})$ and
$\{(v_1,w_1),(v_2,w_2)\}$ have different colors.
This contradicts the assumption that $(a,b) \in Aut(G(A\times B))$.

We consider the case where $a \notin A$. Since $A \in DGR$, there is an ordered pair $(v_1,v_2)$, where $v_1,v_2 \in V$, for which there is no permutation $a_1 \in A$ such that $a_1((v_1,v_2))=a((v_1,v_2))$.
By assumption, there is an ordered pair $(w_1,w_2)$, $w_1,w_2 \in W$ such that $(w_1,w_2)$ belongs to not self-paired Or-orbital and $b((w_1,w_2)) = b_1((w_1,w_2))$ for some $b_1 \in B$.
A similar proof as above shows that the edge
$$(a,b)(\{(v_1,w_1),(v_2,w_2)\})=(a,b_1)(\{(v_1,w_1),(v_2,w_2)\})$$ belongs to a different NOr-orbital than the edge $\{(v_1,w_1),(v_2,w_2)\}$. 
Again, this contradicts the assumption that $(a,b) \in Aut(G(A\times B))$.
\end{proof}

Now, we consider the case where one of the groups is equal to $I_2$.

\begin{Lemma}
Let $A \in (DGR \setminus GR)$. 
Then, $A \times I_2 \in GR$.
\end{Lemma}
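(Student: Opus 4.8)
The plan is to establish the reverse of the inclusion in Fact~\ref{f2}, namely $Aut(G(A \times I_2)) \subseteq A \times I_2$; combined with Fact~\ref{f2} this yields $A \times I_2 = Aut(G(A \times I_2)) = \overline{A \times I_2}$, so that $A \times I_2 \in GR$ (a group lies in $GR$ exactly when it equals its closure $\bar{\,\cdot\,}$, by Fact~\ref{f1}). Writing $A = (A,V)$ and $I_2 = (I_2,\{0,1\})$, I note first that $\bar{I_2} = S_2$, so by Fact~\ref{f2} every $\sigma \in Aut(G(A \times I_2))$ has the form $(a,b)$ with $a \in \bar A$ and $b \in S_2$. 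It then suffices to rule out everything except $a \in A$ and $b = id$.

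First I would show $b = id$. Since $A \notin GR$ we have $A \neq I_1$, hence $|V| \ge 2$ and there is a genuine edge $\{(v_1,0),(v_2,0)\}$ with $v_1 \ne v_2$ lying inside level $0$. Because no element of $A \times I_2$ alters the second coordinate, the NOr-orbital of a level-$0$ horizontal edge stays entirely within level $0$, so it is disjoint from the NOr-orbital of the corresponding level-$1$ horizontal edge, and the two therefore receive different colors in $G(A \times I_2)$. If $b$ were the transposition, $\sigma$ would send the level-$0$ edge to a level-$1$ edge and change its color, contradicting $\sigma \in Aut(G(A\times I_2))$. Thus $b = id$. This is the analogue, for $I_2$, of the observation used in the proof of Theorem~\ref{I_2}.

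Next, with $\sigma = (a,id)$, I would argue that $a \in A$. Suppose not, so $a \in A' = \bar A \setminus A$. Because $A \in DGR$, the failure $a \notin A$ means $a$ does not preserve all Or-orbitals of $A$: there is an ordered pair $(v_1,v_2)$, necessarily with $v_1 \ne v_2$, for which no $a_1 \in A$ satisfies $a_1((v_1,v_2)) = a((v_1,v_2))$. I would then consider the diagonal edge $e = \{(v_1,0),(v_2,1)\}$, whose endpoints sit on different levels, and its image $\sigma(e) = \{(a(v_1),0),(a(v_2),1)\}$. The claim is that $e$ and $\sigma(e)$ lie in different NOr-orbitals of $A \times I_2$: any element of $A \times I_2$ mapping $e$ to $\sigma(e)$ has the form $(a_1,id)$, and since $I_2$ cannot interchange the two levels the only admissible matching of the unordered endpoints is the level-respecting one, which forces $a_1(v_1) = a(v_1)$ and $a_1(v_2) = a(v_2)$ --- precisely the equality excluded by the choice of $(v_1,v_2)$. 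Hence $e$ and $\sigma(e)$ receive different colors, contradicting $\sigma \in Aut(G(A \times I_2))$. Therefore $a \in A$ and $\sigma \in A \times I_2$, giving the desired inclusion.

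The step I expect to carry the real content is the second one, and specifically the point that, since $I_2$ consists of the identity alone, it can never reverse an ordered pair of levels. This is exactly the non-self-paired behaviour of the Or-orbital $\{(0,1)\}$ of $I_2$ that powered Lemma~\ref{not-self-paired}; here it comes for free, so the would-be ``reversed'' alignment in the orbital-matching argument is automatically vacuous, and only the aligned case remains, which contradicts the $2$-closedness of $A$. Everything else is routine bookkeeping with the definition of $G(A \times I_2)$ and Facts~\ref{f1}--\ref{f2}.
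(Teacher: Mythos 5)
Your proof is correct and follows essentially the same route as the paper: first force $b=id$ by observing that horizontal edges on the two levels lie in distinct NOr-orbitals and hence get different colors, then use the $2$-closedness of $A$ to find an ordered pair $(v_1,v_2)$ witnessing $a\notin A$ and derive a color violation on the diagonal edge $\{(v_1,0),(v_2,1)\}$. The paper compresses the second step into a reference to the second case of the proof of Lemma~\ref{stabilizowane-orbity}; you simply spell that case out for $B=I_2$, correctly noting that triviality of $I_2$ kills the ``reversed'' matching automatically.
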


\begin{proof}
Let $A=(A,V)$ and $I_2=(I_2,\{w_1,w_2\})$.
Assume to the contrary that there is $(a,b) \in Aut(G(A \times I_2)) \setminus (A \times I_2)$.
Since, for any $v_1, v_2, v_3, v_4 \in V$, the edges
$\{(v_1,w_1),(v_2,w_1)\}$ and $\{(v_3,w_2),(v_4,w_2)\}$ have different colors, $b = id$.
In the same way as in the second case of the proof of the previous lemma, we get a contradiction.
\end{proof}

Now, we consider the last case.

\begin{Lemma} \label{2}
Let $A,B \in DGR \setminus (GR \cup I_2)$.
If there exists $a \in A'$ which pairs all the pairs of the paired
Or-orbitals of $A$ and there exists $b \in B'$ which pairs all the pairs of the paired Or-orbitals of $B$, then $A \times B \not\in GR$. Moreover, $A
\times B$ is transitive.
\end{Lemma}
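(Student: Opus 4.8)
The plan is to exhibit a permutation lying in $Aut(G(A\times B))$ but not in $A\times B$, which by Fact~\ref{f1} applied to the group $A\times B$ (so that $Aut(G(A\times B))=\overline{A\times B}$, and $A\times B\in GR$ iff $A\times B=\overline{A\times B}$) shows $A\times B\notin GR$. The natural candidate is $(a,b)$, where $a\in A'$ and $b\in B'$ are the elements provided by the hypothesis; since $a\notin A$ we immediately get $(a,b)\notin A\times B$, so everything reduces to checking that $(a,b)$ preserves the colors of $G(A\times B)$, i.e. sends each NOr-orbital of $A\times B$ into itself. The computational heart is a single uniform observation: for every ordered pair $(v_1,v_2)$ with $v_1\ne v_2$, writing $O$ for its Or-orbital of $A$ and $O^{*}$ for the orbital paired with $O$, one has $a((v_1,v_2))\in O^{*}$. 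Indeed, $a\in\bar A$ preserves the NOr-orbital $\{O,O^{*}\}$ of the unordered pair $\{v_1,v_2\}$, so $a((v_1,v_2))\in O\cup O^{*}$; if $O$ is self-paired then $O^{*}=O$ and there is nothing to prove, while if $O$ is non-self-paired the hypothesis that $a$ pairs $O$ with $O^{*}$ forces $a(O)=O^{*}$. The identical statement holds for $b$ and the Or-orbitals of $B$.

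With this in hand I would split into the three edge types of $G(A\times B)$, noting first (as in Fact~\ref{f2}) that $(a,b)$ sends columns to columns, rows to rows, and ``diagonal'' edges to diagonal edges. For a column edge $\{(v,w_1),(v,w_2)\}$ the NOr-orbital is determined by the $A$-orbit of $v$ together with the $B$-NOr-orbital of $\{w_1,w_2\}$; since $A\notin GR\cup\{I_2\}$, Lemma~\ref{or} gives that $a$ preserves $A$-orbits, so $a(v)$ stays in the orbit of $v$, while $b\in\bar B$ keeps $\{w_1,w_2\}$ in its $B$-NOr-orbital, so the color is preserved; rows are symmetric, using that $B\notin GR\cup\{I_2\}$. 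The main case is a diagonal edge $e=\{(v_1,w_1),(v_2,w_2)\}$ with $v_1\ne v_2$ and $w_1\ne w_2$: its NOr-orbital of $A\times B$ is recorded by the invariant $\{(O,P),(O^{*},P^{*})\}$, where $O,P$ are the Or-orbitals of $(v_1,v_2)$ and $(w_1,w_2)$. By the uniform observation the distinguished pair of $(a,b)(e)$ is $(O^{*},P^{*})$, and since $(O^{*},P^{*})$ already belongs to $\{(O,P),(O^{*},P^{*})\}$, the two invariants coincide and $e$, $(a,b)(e)$ lie in the same NOr-orbital.

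Having checked all edge types, $(a,b)\in Aut(G(A\times B))=\overline{A\times B}$ while $(a,b)\notin A\times B$, which gives $A\times B\notin GR$. For the transitivity statement I would show each factor is already transitive: if $A$ had two distinct orbits $\Delta_1\ne\Delta_2$, then any cross-orbit pair $(v_1,v_2)$ with $v_1\in\Delta_1$, $v_2\in\Delta_2$ would lie in a non-self-paired Or-orbital $O$ (its pair runs from $\Delta_2$ to $\Delta_1$, so $O^{*}\ne O$), whence the hypothesis forces $a(O)=O^{*}$ and thus $a(v_1)\in\Delta_2$, contradicting that $a$ preserves $A$-orbits by Lemma~\ref{or}; hence $A$, and symmetrically $B$, are transitive, and the product action of two transitive groups is transitive. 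I expect the diagonal case to be the main obstacle: the hypothesis makes $a$ and $b$ reverse their non-self-paired orbitals in lockstep, and this lockstep is precisely what places $(O^{*},P^{*})$ inside the unordered invariant of $e$. The delicate point is that the self-paired orbitals must be absorbed cleanly through the identity $O^{*}=O$, which is what lets the single uniform observation cover every subcase simultaneously rather than forcing a four-fold case split.
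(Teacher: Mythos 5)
Your proposal is correct and follows essentially the same route as the paper: exhibit $(a,b)$ with $a\in A'$, $b\in B'$ and check it preserves every NOr-orbital of $A\times B$, the key point being that $a((v_1,v_2))$ always lands in the orbital paired with that of $(v_1,v_2)$ (and likewise for $b$), so the two reversals cancel on diagonal edges; your ``invariant'' $\{(O,P),(O^{*},P^{*})\}$ is just a repackaging of the paper's explicit choice of $a_1,b_1$ with $a((v_1,v_2))=a_1((v_2,v_1))$ and $b((w_1,w_2))=b_1((w_2,w_1))$. Your transitivity argument via cross-orbit pairs is the same observation the paper dismisses as immediate, only spelled out.
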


\begin{proof}
Let $A=(A,V)$ and $B=(B,W)$. Since $A \ne I_2$ and $B \ne I_2$, by
Lemma~\ref{or}, every permutation $a \in A'$ preserves the orbits of $A$ (in action on $V$) and every permutation $b \in B'$ preserves the orbits of $B$ (in action on $W$).
Hence, we obtain immediately, under the assumptions on $A$ and $B$, that the permutation groups $A$ and $B$ have to be transitive. Consequently, for every $a \in \bar{A}, b \in \bar{B}$, $v,v_1,v_2 \in V$, and $w, w_1, w_2
\in W$, the edge $(a,b)(\{(v,w_1),(v,w_2)\})$ has the same color as the edge $(\{(v,w_1),(v,w_2)\})$, and moreover, the edge
$(a,b)(\{(v_1,w),(v_2,w)\})$ has the same color as the edge $(\{(v_1,w),(v_1,w)\})$.

We choose $a$ and $b$ as in the formulation of the lemma, and fix the elements $v_1, v_2 \in V$ and $w_1, w_2 \in W$.
Since $a$ and $b$ preserves no non-self-paired Or-orbit, the ordered pair
$a((v_1,v_2))$ belongs to the Or-orbital of the ordered pair $(v_2,v_1)$ and the ordered pair $b((w_1,w_2))$ belongs to the Or-orbital of the ordered pair $(w_2,w_1)$.
Hence, there are $a_1 \in A$ and  $b_1 \in B$ such that $a((v_1, v_2))=a_1((v_2,v_1))$ and $b((w_1,w_2))=b_1((w_2,w_1))$.
Therefore, we have
\begin{eqnarray}
\nonumber & E( (a,b) (\{ (v_1,w_1), (v_2,w_2) \}) ) = E(  \{
(a(v_1),b(w_1)), (a(v_2),b(w_2)) \} ) = & \\
\nonumber & E(  \{ (a_1(v_2),b_1(w_2)), (a_1(v_1),b_1(w_1)) \} ) = E(
(a_1,b_1)( \{ (v_1,w_1), (v_2,w_2) \}) ) = & \\
\nonumber & E(  \{ (v_1,w_1), (v_2,w_2) \} ). & \end{eqnarray}
The vertices $v_1$, $v_2$, $w_1$, and $w_2$ are arbitrary.
Hence, the permutation $(a,b)$ preserves all colors. 
Consequently, $(a,b) \in Aut(G(A \times B)\setminus (A \times B))$.
\end{proof}

Summarizing this section, we have:

\begin{Theorem} \label{GR}
Let $A$ and $B$ be permutation groups.
Then, $A \times B \in GR$, except for the following cases:

\begin{enumerate}

\item[(i)] $A \times B \notin DGR$, i.e., either $A \notin DGR$ or $B
\notin DGR$,

\item[(ii)] either every Or-orbital of $A \in GR$ is self-paired and $B
\notin GR \cup \{I_2\}$ or every Or-orbital of $B \in GR$ is self-paired and $A \notin GR \cup
\{I_2\}$,

\item[(iii)] $A,B \in DGR \setminus (GR \cup \{I_2\})$ are transitive with at least one element in $A'$ or $B'$, respectively, which pairs all the pairs of the paired Or-orbitals.
\end{enumerate}

\end{Theorem}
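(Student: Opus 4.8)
The plan is to recognize Theorem~\ref{GR} as a \emph{synthesis} of the results already established in this section and to prove it by an exhaustive case analysis showing that the three listed families (i)--(iii) capture \emph{exactly} those pairs $(A,B)$ with $A\times B\notin GR$; equivalently, I would prove the biconditional ``$A\times B\in GR$ if and only if none of (i),(ii),(iii) holds.'' Since the product action is commutative up to permutation isomorphism, $A\times B\cong B\times A$, so throughout I may assume whichever ordering of the two factors is convenient, which roughly halves the casework and matches the symmetric phrasing of (ii).

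First I would dispose of the digraph obstruction: if (i) holds, say $A\notin DGR$, then Corollary~\ref{GRN} gives $A\times B\notin GR$ at once, while Corollary~\ref{DGR} confirms this is the only way to leave $DGR$. Hence I may assume $A,B\in DGR$ and partition according to $GR$- and $I_2$-membership. If $A,B\in GR$, the product theorem of \cite{je} quoted in the introduction yields $A\times B\in GR$, and one checks that neither (ii) nor (iii) can hold in this stratum. If exactly one factor, say $B$, lies in $GR$, I split on $A$: when $A=I_2$, Theorem~\ref{I_2} gives $A\times B\in GR$; when $A\in DGR\setminus(GR\cup\{I_2\})$, Corollary~\ref{Nie-Tak} states that $A\times B\in GR$ precisely when $B$ has a non-self-paired Or-orbital, so the failing subcase (all Or-orbitals of $B$ self-paired) is exactly the second disjunct of (ii). The first disjunct follows by swapping the roles of $A$ and $B$.

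The remaining regime is $A,B\in DGR\setminus GR$. If one factor equals $I_2$, the (unnumbered) lemma asserting $A\times I_2\in GR$ for $A\in DGR\setminus GR$, together with $I_4\in GR(3)$ from Theorem~\ref{I_2}, settles it, and no exceptional case applies. Otherwise $A,B\in DGR\setminus(GR\cup\{I_2\})$, and here the entire content reduces to the dichotomy between Lemma~\ref{2} and Lemma~\ref{stabilizowane-orbity}. If there exist $a\in A'$ and $b\in B'$ each pairing \emph{all} pairs of paired Or-orbitals of their respective groups, then Lemma~\ref{2} gives $A\times B\notin GR$; this is case (iii). Otherwise at least one factor, say $B$, admits \emph{no} element of $B'$ that pairs all such pairs, i.e.\ every $b\in B'$ leaves some paired pair unpaired, which is precisely the hypothesis of Lemma~\ref{stabilizowane-orbity}; that lemma (in the form obtained from commutativity) then yields $A\times B\in GR$.

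The step I expect to demand the most care is verifying that the case partition is both exhaustive and \emph{sharp}, so that the three exceptional families neither overlap the membership cases nor leave a gap. The delicate point is the phrasing of (iii): beyond the existence of the pairing elements it also asserts transitivity. I would reconcile this by invoking Lemma~\ref{or}: since $A,B\neq I_2$, any $a\in A'$ (resp.\ $b\in B'$) preserves the orbits, so the existence of an element pairing all paired Or-orbital pairs already \emph{forces} transitivity of that factor. Thus the transitivity clause of (iii) is automatic once the pairing clause holds, the condition of (iii) is equivalent to the joint hypothesis of Lemma~\ref{2}, and its negation is exactly the trigger for Lemma~\ref{stabilizowane-orbity}. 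Confirming that (ii) and (iii) are mutually exclusive (they inhabit disjoint $GR$/$DGR$ strata) and that within each stratum the cited result points to the same verdict as the theorem completes the bookkeeping.
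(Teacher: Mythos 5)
Your proposal is correct and follows essentially the same route as the paper, which states Theorem~\ref{GR} as a summary of the section (Corollaries~\ref{GRN} and \ref{Nie-Tak}, Theorem~\ref{I_2}, the two $I_2$-lemmas, and Lemmas~\ref{stabilizowane-orbity} and \ref{2}) without writing out the case analysis; your explicit bookkeeping, including the observation via Lemma~\ref{or} that the pairing hypothesis in (iii) already forces transitivity, matches the intended argument.
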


\section{Corollaries and problems}

\begin{Problem}
Describe all the groups with the property as in {\rm Theorem~\ref{GR}
(iii)}
\end{Problem}

The groups with this property are all regular abelian and regular generalized dicyclic groups.
However there are also many others transitive permutation groups with this property.

\begin{Example}
$A=\langle (0,1,2,3,4,5,6), (1,2,4)(3,6,5)\rangle$ has this property.
\end{Example}

This is a subgroup of $F_7$ generated by translations and multiplication by
$2$ that has order $3=6/2$.

\begin{Problem}
If any subgroup of $F_{p^n}$ generated by translations and $\omega^{2k}$, where $\omega$ is a generator of the multiplicative group $F^*_{p^n}$, and
$k$ divides $n$, has this property?
\end{Problem}

\begin{Corollary}
Except for the abelian groups of exponent greater than two and generalized dicyclic groups, all the finite regular permutation groups belong to the class $GR$.
\end{Corollary}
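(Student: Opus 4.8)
The plan is to turn the statement into a purely group-theoretic question about the $2^*$-closure of a regular representation. Let $G$ denote the abstract group underlying the regular permutation group, identified with its left-regular action on itself. Then the Or-orbitals are the sets $O_g=\{(x,xg):x\in G\}$ indexed by $g\in G$, and the NOr-orbitals are indexed by the unordered pairs $\{g,g^{-1}\}$. First I would record that every finite regular group lies in $DGR$: by the Observation it suffices to show no permutation outside $G$ fixes all Or-orbitals, and indeed if $a$ fixes every $O_g$ then $a(x)^{-1}a(xg)=g$ for all $x,g$, so setting $x=e$ gives $a(g)=a(e)g$, i.e. $a$ is left translation by $a(e)\in G$. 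Hence $G\in DGR$, and recall that $G\in GR$ iff $G=\bar G$.

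Since $\bar G\supseteq G$ is transitive on a set of size $|G|$, we have $G=\bar G$ exactly when the stabiliser of $e$ in $\bar G$ is trivial. An element $\sigma$ of that stabiliser fixes $e$ and preserves every NOr-orbital; testing the edges $\{e,g\}$ forces $\sigma(g)\in\{g,g^{-1}\}$ for all $g$, and preservation of the NOr-orbital of an arbitrary edge $\{x,y\}$ is equivalent to $\sigma(x)^{-1}\sigma(y)\in\{x^{-1}y,\,y^{-1}x\}$. Thus the corollary reduces to the claim that a nontrivial such $\sigma$ exists if and only if $G$ is abelian of exponent greater than $2$ or generalized dicyclic.

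For the direction that these two families really are exceptions I would exhibit $\sigma$ explicitly. If $G$ is abelian of exponent greater than $2$, inversion $\iota(x)=x^{-1}$ is nontrivial and satisfies $\iota(x)^{-1}\iota(y)=xy^{-1}=y^{-1}x$, so it preserves all NOr-orbitals. If $G=\langle A,y\rangle$ is generalized dicyclic with $A$ abelian of index $2$, central involution $z=y^2$, and $yay^{-1}=a^{-1}$ for $a\in A$, I would take $\sigma$ to be the identity on $A$ and inversion on the coset $Ay$; splitting $x,y$ according to their cosets and using that $p^2=z$ for every $p\in Ay$, a short check confirms $\sigma(x)^{-1}\sigma(y)\in\{x^{-1}y,y^{-1}x\}$ in each case, and $\sigma$ is nontrivial because $p\ne p^{-1}$ on $Ay$. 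In both situations $\bar G\supsetneq G$, so $G\notin GR$.

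The substantive part, and the step I expect to be hardest, is the converse classification. Writing $N=\{g:\sigma(g)=g^{-1}\ne g\}$ (nonempty when $\sigma$ is nontrivial) and $F$ for the fixed set, I would read off from the condition $\sigma(x)^{-1}\sigma(y)\in\{x^{-1}y,y^{-1}x\}$, split into the four cases according to whether $x,y$ are fixed or inverted, the two structural laws it imposes: any two elements of $N$ either commute or have equal squares, and every element of $N$ conjugates every element of $F$ to its inverse. I would then argue by cases on $F$. If $F$ consists only of $e$ and involutions, then $\sigma=\iota$ and the ``commute-or-equal-squares'' law forces $G$ to be abelian or, in the non-abelian case, a generalized dicyclic $2$-group. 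If $F$ contains a non-involution, the conjugation relations isolate an abelian subgroup of index $2$ whose elements are inverted by those of $N$, displaying $G$ as generalized dicyclic. The delicate points are showing that the relevant candidate set is genuinely a subgroup and that $N$ forms a single coset; this is where the case analysis concentrates, and where I would either push the elementary argument through or appeal to the known classification of regular groups admitting a nontrivial color automorphism from the graphical-regular-representation literature.
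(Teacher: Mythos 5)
Your route is genuinely different from the paper's. The paper does not classify anything directly: for the exceptional families it cites Godsil for $A\notin GR(2)$ (hence $A\notin GR$), and for all other regular groups it invokes Godsil's theorem that $A\times S_2^4\in GR(2)$ and then runs Lemma~\ref{self-paired} (equivalently Theorem~\ref{GR}(ii)) backwards: since $S_2^4\in GR$ has all Or-orbitals self-paired, $A\in DGR\setminus(GR\cup\{I_2\})$ would force $A\times S_2^4\notin GR$, a contradiction. So the whole converse is outsourced to the GRR literature plus the paper's own product theorem. You instead reduce $G=\bar G$ to the triviality of the point stabiliser of $\bar G$ and attempt the classification from scratch. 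Your reduction is correct (orbit--stabiliser gives $G=\bar G$ iff $\bar G_e=1$; testing $\{e,g\}$ gives $\sigma(g)\in\{g,g^{-1}\}$; the general constraint $\sigma(x)^{-1}\sigma(y)\in\{x^{-1}y,\,y^{-1}x\}$ is right), your explicit $\sigma$ for the abelian and generalized dicyclic cases checks out, and the two structural laws you extract (elements of $N$ pairwise commute or have equal squares; elements of $N$ invert elements of $F$ by conjugation) are correctly derived. What your approach buys is a self-contained, citation-free proof that also yields $G\in DGR$ for regular $G$ along the way; what the paper's approach buys is brevity and a nontrivial application of its own Theorem~\ref{GR}.

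The gap is in the step you yourself flag: deriving from those two laws that $G$ is abelian of exponent greater than two or generalized dicyclic. As written, this is a sketch, not a proof --- you do not show that the candidate index-two subgroup is closed under multiplication, that $N$ is a single coset, or how the ``commute-or-equal-squares'' dichotomy on $N$ rules out everything except the two listed families (the equal-squares branch in particular requires real work to pin down the dicyclic structure, including the $2$-group subtleties). Falling back on ``the known classification \ldots from the graphical-regular-representation literature'' is legitimate but then your proof is no more self-contained than the paper's, and you would need to check that the cited classification is stated for arbitrary NOr-orbital-preserving bijections fixing $e$ (as you need) rather than only for group automorphisms. Until that case analysis is actually carried out or a precise citation supplied, the hard direction of the corollary is not established by the proposal.
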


\begin{proof}
Let $A$ be an abelian group of exponent greater than two or a generalized dicyclic group.
Then, it is known (see, \cite{god}, for instance) that $A \notin GR(2)$.
In this same way, $A \notin GR$.
Assume that $A$ is not of this form.
Then, it is well known (see \cite{god}) that $A \times S_2^4 \in GR(2)$.
Since $S_2^4 \in GR$ and it has all Or-orbitals self-paired, then by
Theorem~\ref{GR}~(ii), $A \in GR$.
\end{proof}

As another corollary, we have another proof of a well-known fact.

\begin{Corollary}
Every regular permutation group belongs to $DGR$.
\end{Corollary}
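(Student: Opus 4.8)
The plan is to apply the Observation directly: to show $A \in DGR$ it suffices to prove that every permutation of the underlying set preserving all Or-orbitals of $A$ already belongs to $A$. Since $A$ is regular, I would first identify the set $V$ on which it acts with the group $A$ itself, so that $A$ acts by left translations $L_a : x \mapsto ax$.

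Next I would compute the Or-orbitals of this action. The orbit of an ordered pair $(x,y)$ of distinct points is $\{(ax,ay) : a \in A\}$; taking $a = x^{-1}$ shows that this orbit contains the pair $(1,x^{-1}y)$ and hence is determined by the single element $g = x^{-1}y \neq 1$. Thus the Or-orbitals are exactly the sets $O_g = \{(x,xg) : x \in A\}$, one for each $g \in A \setminus \{1\}$, and they are pairwise distinct.

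Then I would analyze an arbitrary permutation $\sigma$ of $V = A$ that preserves every $O_g$. Preserving $O_g$ forces $(\sigma(x),\sigma(xg)) \in O_g$ for all $x$, that is, $\sigma(xg) = \sigma(x)\,g$ for all $x \in A$ and all $g \neq 1$. Setting $x = 1$ gives $\sigma(g) = \sigma(1)\,g$, so $\sigma = L_{\sigma(1)}$ is a left translation and therefore lies in $A$. By the Observation this means no permutation outside $A$ preserves all Or-orbitals, so $A \in DGR$.

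The argument is short, and the only point requiring real care is the bookkeeping of left versus right multiplication in the orbital computation; choosing the left regular representation keeps the orbitals indexed on the right by $g$ and makes the final step transparent. Alternatively, one could split into cases using the previous corollary, since the non-exceptional regular groups already lie in $GR \subseteq DGR$, leaving only the abelian groups of exponent greater than two and the generalized dicyclic groups to treat separately; but the uniform orbital computation above handles all regular groups at once, and this is the approach I would follow.
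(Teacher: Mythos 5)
Your proof is correct, but it takes a genuinely different route from the paper's. You argue directly from the Observation: identifying $V$ with $A$ under the left regular representation, you compute that the Or-orbitals are the sets $O_g=\{(x,xg): x\in A\}$ for $g\ne 1$, and that any permutation $\sigma$ preserving all of them satisfies $\sigma(xg)=\sigma(x)g$, hence commutes with all right translations and is therefore itself a left translation, i.e.\ lies in $A$. This is the classical first-principles argument that regular groups are $2$-closed, and every step checks out (including the reduction $\sigma=L_{\sigma(1)}$ from the case $x=1$). The paper instead presents the statement deliberately as a corollary of its own machinery: it takes an unsolvable regular group $S$, notes that $A\times S$ is an unsolvable regular group and hence lies in $GR(2)\subseteq DGR$ by Godsil's GRR theorem, and then invokes Corollary~\ref{DGR} (that $A\times B\in DGR$ forces $A\in DGR$) to conclude. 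Your approach buys self-containedness and elementarity -- it needs nothing beyond the Observation and the structure of the regular action -- whereas the paper's proof is intentionally a showcase that its product theorem recovers this well-known fact as a byproduct; your suggested alternative via case-splitting on Godsil's classification is closer in spirit to the paper but still not what it does, since the paper avoids all case analysis by multiplying with a single unsolvable group.
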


\begin{proof}
Let $S$ be an unsolvable regular group.
Then, for every regular group $A$, the group $A \times S$ is unsolvable.
By \cite{god}, we have $A \times S \in GR(2) \subseteq DGR$.
By Corollary~\ref{DGR}, $A \in DGR$.
\end{proof}

\footnotesize

\end{document}